\numberwithin{equation}{section}
\numberwithin{figure}{section}
\theoremstyle{plain}
\newtheorem{thm}{\protect\theoremname}[section]
\theoremstyle{definition}
\newtheorem{defn}[thm]{\protect\definitionname}
\theoremstyle{plain}
\newtheorem{prop}[thm]{\protect\propositionname}
\theoremstyle{plain}
\newtheorem{cor}[thm]{\protect\corollaryname}
\theoremstyle{plain}
\newtheorem{lem}[thm]{\protect\lemmaname}
\providecommand{\corollaryname}{Corollary}
\providecommand{\definitionname}{Definition}
\providecommand{\lemmaname}{Lemma}
\providecommand{\propositionname}{Proposition}
\providecommand{\theoremname}{Theorem}
\begin{document}
\selectlanguage{american}%
\global\long\def\epsilon{\varepsilon}%

\global\long\def\phi{\varphi}%

\global\long\def\RR{\mathbb{R}}%

\global\long\def\SS{\mathbb{S}}%

\global\long\def\oo{\mathbf{1}}%

\global\long\def\dd{\mathrm{d}}%

\global\long\def\div{\operatorname{div}}%

\global\long\def\TV{\operatorname{TV}}%

\global\long\def\per{\operatorname{Per}}%

\global\long\def\cvx{\operatorname{Cvx}}%

\global\long\def\lc{\operatorname{LC}}%

\global\long\def\dom{\operatorname{dom}}%

\global\long\def\HH{\mathcal{H}}%

\title{\selectlanguage{english}%
The anisotropic total variation and surface area measures}
\author{Liran Rotem}
\address{Department of Mathematics, Technion - Israel Institute of Technology,
Israel}
\email{lrotem@technion.edu}
\thanks{The author is partially supported by ISF grant 1468/19 and BSF grant
2016050.}
\begin{abstract}
We prove a a formula for the first variation of the integral of a
log-concave function, which allows us to define the surface area measure
of such a function. The formula holds in complete generality with
no regularity assumptions, and is intimately related to the notion
of anisotropic total variation and to anisotropic coarea formulas.
This improves previous partial results by Colesanti and Fragal\`{a},
by Cordero-Erausquin and Klartag and by the author. 
\end{abstract}

\maketitle

\section{\label{sec:introduction}Introduction}

One of the basic constructions in convex geometry is the surface area
measure of a convex body. To recall the definition, let $K\subseteq\RR^{n}$
be a convex body, i.e a compact convex set with non-empty interior.
Then the Gauss map $n_{K}:\partial K\to\SS^{n-1}$ exists $\HH^{n-1}$-almost
everywhere, where $\HH^{n-1}$ denotes the $(n-1)$-dimensional Hausdorff
measure and $\SS^{n-1}=\left\{ x\in\RR^{n}:\ \left|x\right|=1\right\} $
denotes the unit sphere. We then define the surface area measure $S_{K}$
as the push-forward $S_{K}=\left(n_{K}\right)_{\sharp}\left(\left.\HH^{n-1}\right|_{\partial K}\right)$.
More explicitly, for every measurable function $\phi:\SS^{n-1}\to\RR$
we have 
\[
\int_{\SS^{n-1}}\phi\dd S_{K}=\int_{\partial K}\left(\phi\circ n_{K}\right)\dd\HH^{n-1}.
\]

The surface area measure can be equivalently defined using the first
variation of volume. For convex bodies $K$ and $L$, let $K+L$ denotes
the usual Minkowski sum, and let $\left|K\right|$ denote the (Lebesgue)
volume of $K$. Then we have 
\begin{equation}
\lim_{t\to0^{+}}\frac{\left|K+tL\right|-\left|K\right|}{t}=\int_{\SS^{n-1}}h_{L}\dd S_{K},\label{eq:surface-bodies}
\end{equation}
 where $h_{L}:\SS^{n-1}\to\RR$ is the support function of $L$ which
is defined by $h_{L}(\theta)=\max_{x\in L}\left\langle x,\theta\right\rangle $.
For a proof of this fact the reader may consult a standard reference
book in convex geometry such as \cite{Schneider2013} or \cite{Hug2020}. 

In this paper we will be interested in functional extensions of the
formula \eqref{eq:surface-bodies}. Recall that a function $f:\RR^{n}\to[0,\infty)$
is called log-concave if 
\[
f\left((1-\lambda)x+\lambda y\right)\ge f(x)^{1-\lambda}f(y)^{\lambda}
\]
 for all $x,y\in\RR^{n}$ and $0\le\lambda\le1$. We denote by $\lc_{n}$
the class of all upper semi-continuous log-concave functions. Note
that the class of convex bodies in $\RR^{n}$ embeds naturally into
$\lc_{n}$ using the map 
\[
K\hookrightarrow\oo_{K}(x)=\begin{cases}
1 & x\in K\\
0 & \text{otherwise. }
\end{cases}
\]
We would like to consider log-concave functions as ``generalized
convex bodies'', and extend the notion of the surface area measure
to this setting. To achieve this goal we first need to recall the
standard operations on log-concave functions: The sum of two log-concave
functions is given by the sup-convolution, i.e. 
\[
\left(f\star g\right)(x)=\sup_{y\in\RR^{n}}\left(f(y)g(x-y)\right).
\]
 The associated dilation operation is given by $\left(\lambda\cdot f\right)(x)=f\left(\frac{x}{\lambda}\right)^{\lambda}$
\textendash{} note that we have for example $f\star f=2\cdot f$.
Finally, the ``volume'' of $f$ will be given by the Lebesgue integral
$\int f$. Using these constructions we may define:
\begin{defn}
Fix $f,g\in\lc_{n}$. The first variation of the integral of $f$
in the direction of $g$ is given by 
\begin{equation}
\delta(f,g)=\lim_{t\to0^{+}}\frac{\int f\star\left(t\cdot g\right)-\int f}{t}.\label{eq:variation-def}
\end{equation}
\end{defn}

The study of log-concave functions as geometric objects has become
a major idea in convex geometry, with useful applications even if
eventually one is only interested in convex bodies. Due to the very
large number of papers in this direction we will not survey all of
them, but only mention the ones that directly deal with the first
variation $\delta(f,g)$. In the case when $f=e^{-\frac{\left|x\right|^{2}}{2}}$
is a Gaussian, $\delta(f,g)$ was studied under the name ``the mean
width of $g$'' by Klartag and Milman (\cite{Klartag2005}) in one
of the papers that began the geometric study of log-concave functions.
This mean width was further studied in \cite{Rotem2012}. In particular
it was proved there that in this case we have 
\[
\delta(f,g)=\int_{\RR^{n}}h_{g}(x)e^{-\left|x\right|^{2}/2}\dd x.
\]
 Here $h_{g}:\RR^{n}\to\RR$ is the support function of $g$, defined
by $h_{g}=(-\log g)^{\ast}$ where 
\[
\phi^{\ast}(x)=\sup_{y\in\RR^{n}}\left(\left\langle x,y\right\rangle -\phi(y)\right)
\]
 is the Legendre transform. 

The case of a general function $f$ was studied by Colesanti and Fragal\`{a}
in \cite{Colesanti2013}. In particular they showed that the limit
in \eqref{eq:variation-def} always exists when $0<\int f<\infty$,
though it may be equal to $+\infty$. To further explain their results
we will need some important definitions:
\begin{defn}
Fix a log-concave function $f:\RR^{n}\to\RR$ with $0<\int f<\infty$,
and write $f=e^{-\phi}$ for a convex function $\phi:\RR^{n}\to(-\infty,\infty]$.
Then: 
\begin{enumerate}
\item The measure $\mu_{f}$ is a measure on $\RR^{n}$ defined as the push-forward
$\mu_{f}=\left(\nabla\phi\right)_{\sharp}\left(f\dd x\right)$. 
\item The measure $\nu_{f}$ is a measure on the sphere $\SS^{n-1}$, defined
as the push-forward $\nu_{f}=\left(n_{K_{f}}\right)_{\sharp}\left(f\dd\!\left.\HH^{n-1}\right|_{\partial K_{f}}\right)$.
Here $K_{f}$ is a shorthand notation for the support of $f$, i.e.
$K_{f}=\left\{ x\in\RR^{n}:\ f(x)>0\right\} $, and $n_{K_{f}}$ denotes
the Gauss map $n_{K_{f}}:\partial K_{f}\to\SS^{n-1}$. 
\end{enumerate}
\end{defn}

For example, for $f(x)=e^{-\left|x\right|^{2}/2}$ we have $\mu_{f}=e^{-\left|x\right|^{2}/2}\dd x$
and $\nu_{f}\equiv0$, as $\partial K_{f}=\partial\RR^{n}=\emptyset$.
For a convex body $K$ we have $\mu_{\oo_{K}}=\left|K\right|\delta_{0}$
and $\nu_{\oo_{K}}=S_{K}$, the usual surface area measure. 

It will be important for us to observe that no regularity is required
for the definitions of $\mu_{f}$ and $\nu_{f}$. Indeed, as $\phi=-\log f$
is a convex function it is differentiable Lebesgue-almost-everywhere
on the set $\left\{ x:\ \phi(x)<\infty\right\} =K_{f}$. Therefore
the push-forward $\left(\nabla\phi\right)_{\sharp}\left(f\dd x\right)$
is well-defined. Similarly since $K_{f}$ is a closed convex set its
boundary $\partial K_{f}$ is a Lipschitz manifold, so in particular
the Gauss map $n_{K_{f}}$ is defined $\HH^{n-1}$-almost-everywhere
and the push-forward is again well-defined. 

While regularity is not needed for the definitions of $\mu_{f}$ and
$\nu_{f}$, it was definitely needed for the representation theorem
of \cite{Colesanti2013}:
\begin{thm}[Colesanti\textendash Fragal\`{a}]
\label{thm:colesanti-fragala}Fix $f,g\in\lc_{n}$, and assume that: 
\begin{enumerate}
\item The supports $K_{f}$, $K_{g}$ are $C^{2}$ smooth convex bodies
with everywhere positive Gauss curvature. 
\item The functions $\psi=-\log f$ and $\phi=-\log g$ are continuous in
$K_{f}$ and $K_{g}$ respectively, $C^{2}$ smooth in the interior
of these sets, and have strictly positive-definite Hessians.
\item We have $\lim_{x\to\partial K_{f}}\left|\nabla\psi(x)\right|=\lim_{x\to\partial K_{g}}\left|\nabla\phi(x)\right|=\infty$. 
\item The difference $h_{f}-c\cdot h_{g}$ is convex for small enough $c>0$. 
\end{enumerate}
Then 
\[
\delta(f,g)=\int_{\RR^{n}}h_{g}\dd\mu_{f}+\int_{\SS^{n-1}}h_{K_{g}}\dd\nu_{f}.
\]
 
\end{thm}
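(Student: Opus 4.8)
The plan is to reduce the computation to one change of variables in the ``gradient picture'' followed by the divergence theorem. Write $f=e^{-\psi}$ and $g=e^{-\phi}$, and recall that $\star$ adds support functions while $\lambda\cdot$ multiplies them by $\lambda$; hence $h_{f\star(t\cdot g)}=h_f+th_g=:u_t$, so that $f_t:=f\star(t\cdot g)=e^{-u_t^{\ast}}$ has support $K_{f_t}=K_f+tK_g$. Under (1)--(3) the support functions $h_f=\psi^{\ast}$ and $h_g=\phi^{\ast}$ are $C^2$ and strictly convex, and $\nabla h_f\colon\RR^n\to\operatorname{int}K_f$, $\nabla h_g\colon\RR^n\to\operatorname{int}K_g$ are diffeomorphisms; consequently $\nabla u_t=\nabla h_f+t\nabla h_g\colon\RR^n\to\operatorname{int}K_{f_t}$ is a diffeomorphism for every $t\ge0$, and, by (4), for small $t<0$ as well. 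Substituting $x=\nabla u_t(y)$ and using the Legendre identity $u_t^{\ast}(\nabla u_t(y))=\langle y,\nabla u_t(y)\rangle-u_t(y)$ gives
\[
\int f_t=\int_{\RR^n}e^{\,u_t(y)-\langle y,\nabla u_t(y)\rangle}\det\nabla^2 u_t(y)\,\dd y.
\]

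I would then differentiate this at $t=0$ under the integral sign. The $t$-derivative of the exponent is $h_g(y)-\langle y,\nabla h_g(y)\rangle$, and by Jacobi's formula the $t$-derivative of $\det\nabla^2 u_t(y)$ at $t=0$ is $\det\nabla^2 h_f(y)\cdot\operatorname{tr}\!\big((\nabla^2 h_f(y))^{-1}\nabla^2 h_g(y)\big)$. Changing back to $x=\nabla h_f(y)$, so that $y=\nabla\psi(x)$, $\dd x=\det\nabla^2 h_f(y)\,\dd y$, $e^{h_f(y)-\langle y,\nabla h_f(y)\rangle}=f(x)$ and $(\nabla^2 h_f(y))^{-1}=\nabla^2\psi(x)$, one arrives at
\[
\delta(f,g)=\int_{\operatorname{int}K_f}f(x)\Big[h_g(\nabla\psi(x))-\big\langle\nabla\psi(x),\nabla h_g(\nabla\psi(x))\big\rangle+\operatorname{tr}\!\big(\nabla^2\psi(x)\,\nabla^2 h_g(\nabla\psi(x))\big)\Big]\dd x.
\]
The first summand is precisely $\int_{\RR^n}h_g\,\dd\mu_f$, by definition of $\mu_f$. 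For the remaining two summands the key observation is the elementary identity
\[
\div\!\big(f(x)\,\nabla h_g(\nabla\psi(x))\big)=f(x)\big[\operatorname{tr}\!\big(\nabla^2\psi(x)\,\nabla^2 h_g(\nabla\psi(x))\big)-\big\langle\nabla\psi(x),\nabla h_g(\nabla\psi(x))\big\rangle\big],
\]
which follows from $\nabla f=-f\nabla\psi$ and the chain rule. Hence, by the divergence theorem on the $C^2$ body $K_f$, the remaining summands equal $\int_{\partial K_f}f(x)\big\langle\nabla h_g(\nabla\psi(x)),n_{K_f}(x)\big\rangle\,\dd\HH^{n-1}(x)$. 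Finally, by (3) we have $\nabla\psi(x)\to\infty$ (and $\nabla\psi(x)/|\nabla\psi(x)|\to n_{K_f}(x)$, by convexity of $\psi$) as $x\to\partial K_f$, while the smoothness and positive curvature of $K_g$ forces $\nabla h_g(p)=\nabla\phi^{\ast}(p)$ to converge, as $p\to\infty$ in a direction $\theta$, to the contact point of $K_g$ with outer normal $\theta$; this yields $\big\langle\nabla h_g(\nabla\psi(x)),n_{K_f}(x)\big\rangle\to h_{K_g}(n_{K_f}(x))$ at the boundary, and therefore the remaining summands equal $\int_{\partial K_f}h_{K_g}(n_{K_f}(x))f(x)\,\dd\HH^{n-1}(x)=\int_{\SS^{n-1}}h_{K_g}\,\dd\nu_f$, as claimed.

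The main obstacle, as usual, is analytic rather than algebraic: the two interchanges above must be justified. Differentiating under the integral sign requires a function dominating $\partial_t\big[e^{u_t(y)-\langle y,\nabla u_t(y)\rangle}\det\nabla^2 u_t(y)\big]$ uniformly for $t$ near $0$; this is exactly the role of hypothesis (4), which guarantees that $u_t=h_f+th_g$ is still the support function of a log-concave function with finite integral for small negative $t$ and thus provides two-sided control on the difference quotients. Likewise, the divergence theorem is applied to $V(x)=f(x)\nabla h_g(\nabla\psi(x))$, which is merely $C^1$ in $\operatorname{int}K_f$ and continuous up to $\partial K_f$, so one must separately verify $\div V\in L^1(K_f)$ and the asserted boundary values of $\langle V,n_{K_f}\rangle$ — both of which rest on hypotheses (1)--(3). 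Assumptions (1)--(2) also serve to make every Hessian, inverse and Jacobian above well-defined and continuous, so that the two substitutions are genuine diffeomorphisms.
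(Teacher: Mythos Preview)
The paper does not give its own proof of this theorem: Theorem~\ref{thm:colesanti-fragala} is quoted from \cite{Colesanti2013} as background, and the paper's contribution is the unrestricted Theorem~\ref{thm:main-thm}, which of course contains it as a special case. Your argument is, in outline, exactly the original Colesanti--Fragal\`a proof: pass to the dual picture $\int f_t=\int e^{u_t-\langle y,\nabla u_t\rangle}\det\nabla^2 u_t\,\dd y$, differentiate in $t$, change variables back, and read off the boundary term via the divergence theorem. The identification of hypothesis~(4) as the device that allows two-sided control of the difference quotients, and of hypotheses~(1)--(3) as what makes the gradient maps global diffeomorphisms and the boundary limits well-defined, is accurate. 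So as a proof of the stated theorem your sketch is sound, with the usual caveat that the integrability of the trace term and the applicability of the divergence theorem to a vector field that is only $C^1$ in the interior still need honest work.

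The route the present paper takes to the more general result is entirely different and worth contrasting. Rather than differentiating a parametrized integral, the paper first treats the case $g=\oo_L$ by recognizing $\delta(f,\oo_L)$ as an anisotropic total variation: Proposition~\ref{prop:variation-measure} identifies the distributional gradient $Df$, Proposition~\ref{prop:TV-def} then gives $\TV_L(f)=\int h_L\,\dd\mu_f+\int h_L\,\dd\nu_f$, and the anisotropic coarea formula (Theorem~\ref{thm:coarea}) together with the layer-cake identity of Proposition~\ref{prop:quer-repr} closes the loop. The passage to general compactly supported $g$ (Proposition~\ref{prop:main-linear-growth}) is then a sandwich argument, comparing $g$ from above and below with indicators and invoking Fatou's lemma and the pointwise derivative Lemma~\ref{lem:der-formula}. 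Your approach is computationally direct and makes the formula transparent under smoothness, but every step leans on the $C^2$ assumptions; the paper's approach trades this for a structural detour through geometric measure theory, which is what allows it to dispense with regularity altogether.
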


Based on Theorem \ref{thm:colesanti-fragala} we can make the following
definition:
\begin{defn}
Given $f\in\lc_{n}$ with $0<\int f<\infty$, we call the pair $\left(\mu_{f},\nu_{f}\right)$
the surface area measure\textbf{s} of the function $f$. 
\end{defn}

We emphasize that unlike a convex body, a log-concave function has
two surface area measures: one defined on $\RR^{n}$, and one defined
on $\SS^{n-1}$. 

While the regularity assumptions of Theorem \ref{thm:colesanti-fragala}
are sufficient, it has always been clear that they are not necessary.
For example, we already saw that for the function $f(x)=e^{-\left|x\right|^{2}/2}$
no regularity assumptions on $g$ are needed. In fact, it was proved
in \cite{Rotem2021} that if $0<\int f<\infty$ and $\nu_{f}=0$ then
we have 
\[
\delta(f,g)=\int_{\RR^{n}}h_{g}\dd\mu_{f}
\]
 with no regularity assumptions. Note that since $f$ is log-concave
and upper semi-continuous it is only discontinuous at points $x\in\partial K_{f}$
such that $f(x)\ne0$. Therefore the condition $\nu_{f}=0$ is equivalent
to the statement that $f$ is continuous $\HH^{n-1}$-almost everywhere.
This property was dubbed essential continuity by Cordero-Erausquin
and Klartag (\cite{Cordero-Erausquin2015}). In their paper, the authors
\cite{Cordero-Erausquin2015} studied the moment measure of a convex
function $\phi$, which in our terminology is simply the surface area
measure $\mu_{e^{-\phi}}$. One of the main results of their paper
is a functional analogue of Minkowski's existence theorem: Given a
measure $\mu$ on $\RR^{n}$, they provide a necessary and sufficient
condition for the existence of a function $f\in\lc_{n}$ with $\mu_{f}=\mu$
and $\nu_{f}=0$. They also prove the uniqueness of such an $f$.
We remark that when the functions involved are not necessarily essentially
continuous, but are sufficiently regular in the sense of Theorem \ref{thm:colesanti-fragala},
a similar uniqueness result was previously proved by Colesanti\textendash Fragal\`{a}
in \cite{Colesanti2013}. We will further discuss this issue of uniqueness
in Section \ref{sec:completing-the-proof} after proving our main
theorem, and explain the results of both papers. We also remark that
another proof of the same existence theorem was given by Santambrogio
in \cite{Santambrogio2016}.

The main goal of this paper is to prove the most general form of Theorem
\ref{thm:colesanti-fragala}, which requires no regularity assumptions:
\begin{thm}
\label{thm:main-thm}Fix $f,g\in\lc_{n}$ such that $0<\int f<\infty$.
Then 
\begin{equation}
\delta(f,g)=\int_{\RR^{n}}h_{g}\dd\mu_{f}+\int_{\SS^{n-1}}h_{K_{g}}\dd\nu_{f}.\label{eq:main-formula}
\end{equation}
\end{thm}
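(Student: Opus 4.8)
The plan is to lift the problem to $\RR^{n+1}$, where \eqref{eq:main-formula} becomes a first-variation formula for a weighted volume of a convex set under Minkowski addition, and then to prove that formula with no regularity by a normal-coordinate computation on the boundary together with a monotone-exhaustion argument. Write $f=e^{-\phi}$, $g=e^{-\psi}$ with $\phi,\psi$ convex and lower semi-continuous, and set $C_{f}=\{(x,t):t\le-\phi(x)\}$, $C_{g}=\{(x,t):t\le-\psi(x)\}\subseteq\RR^{n+1}$. Since $-\log(f\star(s\cdot g))$ is the infimal convolution of $\phi$ and $\psi_{s}(\cdot)=s\psi(\cdot/s)$, and infimal convolution of convex functions corresponds to Minkowski addition of hypographs, the hypograph of $-\log(f\star(s\cdot g))$ is $C_{f}+sC_{g}$; combined with $\int h=\int_{\{t\le-\log h\}}e^{t}\,\dd(x,t)$, writing $\gamma$ for the fixed measure $e^{t}\,\dd(x,t)$ we obtain $\int f\star(s\cdot g)=\gamma(C_{f}+sC_{g})$, and $0<\int f<\infty$ forces $0<\|f\|_{\infty}<\infty$, so $C_{f}$ lies in a lower half-slab and $\gamma(C_{f})=\int f<\infty$. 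Thus \eqref{eq:main-formula} is equivalent to the $\gamma$-weighted anisotropic-perimeter identity $\lim_{s\to0^{+}}\tfrac1s\bigl(\gamma(C_{f}+sC_{g})-\gamma(C_{f})\bigr)=\int_{\partial C_{f}}h_{C_{g}}(\nu)\,e^{t}\,\dd\HH^{n}$. Up to an $\HH^{n}$-null set, $\partial C_{f}$ is the graph $\{(x,-\phi(x)):x\in K_{f}\}$, whose outer normal is parallel to $(\nabla\phi(x),1)$, together with the vertical cylinders $\{y\}\times(-\infty,-\phi(y)]$ over the $y\in\partial K_{f}$ with $\phi(y)<\infty$, whose outer normal is $(\nu_{K_{f}}(y),0)$; since $h_{C_{g}}((p,1))=\psi^{\ast}(p)=h_{g}(p)$ and $h_{C_{g}}((\theta,0))=h_{K_{g}}(\theta)$, using the $1$-homogeneity of $h_{C_{g}}$ to absorb the area elements the graph contributes $\int_{K_{f}}h_{g}(\nabla\phi(x))e^{-\phi(x)}\dd x=\int h_{g}\,\dd\mu_{f}$ and the cylinders contribute $\int_{\partial K_{f}}f(y)h_{K_{g}}(\nu_{K_{f}}(y))\dd\HH^{n-1}(y)=\int h_{K_{g}}\,\dd\nu_{f}$, exactly as in \eqref{eq:main-formula}. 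Throughout I would assume, after translating and rescaling $g$ (and approximating when $\max g$ is not attained), that $g(0)=\max g=1$, so that $h_{g}\ge0$, $0\in C_{g}$, $C_{f}\subseteq C_{f}+sC_{g}$, and all quantities below are monotone in $s$; the degenerate cases $g\equiv0$ and $\|g\|_{\infty}=\infty$ are treated separately.

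To prove the displayed identity one writes $\gamma(C_{f}+sC_{g})-\gamma(C_{f})=\gamma\bigl((C_{f}+sC_{g})\setminus C_{f}\bigr)$. For ``$\le$'', the nearest-point projection onto the convex set $C_{f}$ shows that $(C_{f}+sC_{g})\setminus C_{f}$ lies, up to a $\gamma$-null set, inside $\{y+r\nu:y\in\partial C_{f},\ \nu\in N_{C_{f}}(y),\ 0<r\le s\,h_{C_{g}}(\nu)\}$ — indeed, if $x=c+sc'$ with $c\in C_{f}$, $c'\in C_{g}$, $y=\pi_{C_{f}}(x)$, $\nu=(x-y)/|x-y|$, then $h_{C_{f}}(\nu)+r=\langle x,\nu\rangle\le h_{C_{f}}(\nu)+s\,h_{C_{g}}(\nu)$ — and the area/Steiner formula for the complement of a convex set then bounds $\tfrac1s\gamma((C_{f}+sC_{g})\setminus C_{f})$ by $\int_{\partial C_{f}}\tfrac1s\bigl(\int_{0}^{s\,h_{C_{g}}(\nu(y))}e^{(y+r\nu(y))_{n+1}}J(y,r)\,\dd r\bigr)\dd\HH^{n}(y)$; the inner average converges to $e^{y_{n+1}}h_{C_{g}}(\nu(y))$ as $s\to0^{+}$ (the Jacobian $J\to1$, the weight is continuous, the width is proportional to $s$), so by monotone convergence $\limsup_{s\to0^{+}}\tfrac1s\gamma((C_{f}+sC_{g})\setminus C_{f})\le\int_{\partial C_{f}}h_{C_{g}}(\nu)e^{t}\dd\HH^{n}$. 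For ``$\ge$'' one shows the sweep is wide enough: near $\HH^{n}$-a.e.\ $y_{0}\in\partial C_{f}$ with outer normal $\nu_{0}$, pick $c'\in C_{g}$ with $\langle c',\nu_{0}\rangle$ as close to $h_{C_{g}}(\nu_{0})$ as desired; for $y$ in a small boundary patch $U\ni y_{0}$ and $0\le\tau\le s$, the point $y+\tau c'$ lies in $C_{f}+sC_{g}$ (as $0,c'\in C_{g}$) and outside $C_{f}$ (as $\langle c',\nu(y)\rangle>0$ there), so the prism it sweeps over $U$ has $\tfrac1s\gamma\to\int_{U}e^{y_{n+1}}\langle c',\nu(y)\rangle\dd\HH^{n}(y)$; summing over a fine partition of $\partial C_{f}$ and letting the test vectors tend to the extremal directions of $C_{g}$ gives the matching lower bound, the prisms over distinct patches overlapping in a set of $\gamma$-measure $o(s)$. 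This proves \eqref{eq:main-formula}. (The graph contribution $\int h_{g}\,\dd\mu_{f}$ can alternatively be obtained directly downstairs, from the convexity estimate $f\star(s\cdot g)(x)\le f(x)e^{sh_{g}(\nabla\phi(x))}$ valid for a.e.\ $x\in\operatorname{int}K_{f}$ and the reverse estimate got by testing the competitor $y=\nabla\psi^{\ast}(\nabla\phi(x))$, which recovers the essentially-continuous case with no regularity.)

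The heart of the matter, and the only place genuine difficulty arises, is this boundary analysis without regularity: $C_{f}$ is unbounded with a merely Lipschitz, partly vertical boundary; $\phi$ need not be differentiable or finite on $\partial K_{f}$; and $C_{g}$ may be unbounded with $h_{C_{g}}$ taking the value $+\infty$, so the shell $(C_{f}+sC_{g})\setminus C_{f}$ may have infinite $\gamma$-measure and the interchange of $\lim_{s\to0^{+}}$ with the surface integral is not automatic. I expect the cleanest route is to prove the identity first for $g$ with compact support — so that $K_{g}$ is a convex body, $h_{C_{g}}$ is finite on the outer normals of $C_{f}$, and the Steiner-type estimates are classical — and then to remove the restriction along the monotone exhaustion $g_{R}=g\cdot\oo_{\bar B_{R}}\uparrow g$. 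The lower bound is then immediate, since $f\star(s\cdot g)\ge f\star(s\cdot g_{R})$ pointwise forces $\delta(f,g)\ge\sup_{R}\delta(f,g_{R})=\sup_{R}\bigl(\int h_{g_{R}}\dd\mu_{f}+\int h_{K_{g_{R}}}\dd\nu_{f}\bigr)=\int h_{g}\dd\mu_{f}+\int h_{K_{g}}\dd\nu_{f}$ by monotone convergence, using $h_{g_{R}}\uparrow h_{g}$ and $h_{K_{g_{R}}}\uparrow h_{K_{g}}$ (all integrands nonnegative after the normalization); the upper bound for general $g$ needs in addition a tail estimate, $\int\bigl(f\star(s\cdot g)-f\star(s\cdot g_{R})\bigr)=o(s)$ uniformly as $R\to\infty$, which is where $\int f<\infty$ re-enters. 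That all of this can be run monotonically ultimately reflects the log-concavity of $s\mapsto\int f\star(s\cdot g)$ coming from the Prékopa–Leindler inequality, which is also why the limit defining $\delta(f,g)$ exists.
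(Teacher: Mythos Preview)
Your route is genuinely different from the paper's. The lifting to $\RR^{n+1}$ is correct and attractive: $\int f\star(s\cdot g)=\gamma(C_f+sC_g)$ holds, $\partial C_f$ decomposes $\HH^n$-a.e.\ into the graph of $-\phi$ over $K_f$ and the vertical walls over $\partial K_f\cap\{f>0\}$, and the surface integral $\int_{\partial C_f}h_{C_g}(\nu)\,e^t\,\dd\HH^n$ really does equal $\int h_g\,\dd\mu_f+\int h_{K_g}\,\dd\nu_f$. The paper instead stays in $\RR^n$: it first treats $g=c\cdot\oo_L$ by identifying $\delta(f,\oo_L)$ with the anisotropic total variation $\TV_L(f)$ and invoking the anisotropic coarea formula (Theorem~\ref{thm:coarea}); it then extends to compactly supported $g$ by sandwiching $\tfrac1m\oo_{K_m}\le g\le A\cdot\oo_{K_g}$ and applying Fatou to the pointwise derivative $\lim_{t\to0^+}\tfrac1t(f\star(t\cdot g)-f)=h_g(\nabla\phi)f$ (Lemma~\ref{lem:der-formula}); and it passes to general $g$ by truncation, using Pr\'ekopa--Leindler to rewrite $\delta$ as a double supremum so that no tail estimate is ever needed. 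Your lifting unifies $\mu_f$ and $\nu_f$ as a single boundary measure; the paper's route outsources the hard analysis to a known coarea theorem and a pointwise lemma already proved in \cite{Rotem2021}.

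Where your argument is only a sketch is the weighted first-variation identity itself. Your upper bound parametrises the shell by $y+r\nu(y)$ with $y$ ranging over boundary points of unique outer normal, but the inclusion $(C_f+sC_g)\setminus C_f\subseteq\{y+r\nu(y):0<r\le sh_{C_g}(\nu(y))\}$ fails for shell points whose metric projection lands on a corner of $\partial C_f$ --- notably along the $(n-1)$-dimensional edge where the graph meets the vertical wall --- and these wedge regions carry positive $\gamma$-mass for every $s>0$. One must show that mass is $o(s)$; it is (it is $O(s^2)$, controlled ultimately by $\nu_f(\SS^{n-1})<\infty$, i.e.\ Proposition~\ref{prop:finiteness}), but this is not ``the Steiner formula for the complement of a convex set'' available off the shelf for an unbounded set with an unbounded weight, and it is precisely the place where the lack of regularity bites. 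The prism lower bound's overlap $o(s)$ is likewise asserted rather than proved. Finally, for non-compactly-supported $g$ you should commit to the Pr\'ekopa--Leindler supremum device you allude to at the end rather than the uniform tail estimate $\int(f\star(s\cdot g)-f\star(s\cdot g_R))=o(s)$; the paper deliberately avoids the latter, and it is not obvious that it holds.
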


While the improvement over previous results is simply the elimination
of the various technical assumptions, we do believe Theorem \ref{thm:main-thm}
is of real value. For example, we will see as a corollary that the
pair of measures $\left(\mu_{f},\nu_{f}\right)$ determines $f$ uniquely,
and for this result it is very useful not to have any technical conditions
for the validity of formula \eqref{eq:main-formula}. Moreover, we
believe our proof sheds some light on the reason this formula holds.
In particular, we will see an interesting connection between Theorem
\ref{thm:main-thm} and the notions of anisotropic total variation
and anisotropic perimeter. The main point will be that when $g=\oo_{L}$
and $f\in\lc_{n}$ is arbitrary, Theorem \ref{thm:main-thm} can be
viewed as an anisotropic version of the coarea formula. 

The rest of this note is dedicated to the proof of the theorem. In
Section \ref{sec:anisotropic-total-variations} we will introduce
the anisotropic coarea formula, and explain why it is in fact equivalent
to Theorem \ref{thm:main-thm} in the case when $g=\oo_{L}$ is the
indicator of a convex body. Then in Section \ref{sec:completing-the-proof}
we will discuss the case of a general function $g$, and conclude
the proof. Some of the ingredients that were used in previous results
(mostly in \cite{Rotem2021}) can be used in the proof of Theorem
\ref{thm:main-thm} with few changes, and in these cases we will either
give an exact reference or briefly sketch the argument. 

Before we start with the main proof let us prove a finiteness property
of the measures $\mu_{f}$ and $\nu_{f}$: 
\begin{prop}
\label{prop:finiteness}Assume $f\in\lc_{n}$ and $0<\int f<\infty$.
Then the measure $\mu_{f}$ is finite with a finite first moment.
The measure $\nu_{f}$ is also finite. 
\end{prop}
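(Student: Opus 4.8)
The plan is to prove the three assertions — finiteness of $\mu_f$, finiteness of the first moment of $\mu_f$, and finiteness of $\nu_f$ — essentially independently, reducing each to a known one-dimensional or convex-geometric estimate. Write $f = e^{-\phi}$ with $\phi$ convex, lower semi-continuous, and $0 < \int e^{-\phi} < \infty$; the latter forces $\phi$ to grow at least linearly at infinity, which is the quantitative input we will repeatedly use.

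For the total mass of $\mu_f$, observe that $\mu_f(\RR^n) = \left(\nabla\phi\right)_\sharp(f\,\dd x)\,(\RR^n) = \int_{\RR^n} f = \int f < \infty$ directly from the definition of push-forward, since $\phi$ is differentiable Lebesgue-almost everywhere on $K_f$ and $f = 0$ off $K_f$. So that part is immediate and requires no work beyond unwinding definitions.

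For the first moment $\int_{\RR^n} \left|y\right|\dd\mu_f(y) = \int_{\RR^n} \left|\nabla\phi(x)\right| f(x)\,\dd x$, the plan is to integrate by parts, or rather to use the layer-cake / coarea heuristic: $\left|\nabla\phi\right| e^{-\phi} = \left|\nabla e^{-\phi}\right|$, so the first moment is the total variation $\TV(f) = \int_{\RR^n}\left|\nabla f\right|$ of the log-concave function $f$ itself. By the (isotropic) coarea formula this equals $\int_0^\infty \per\left(\left\{f > s\right\}\right)\dd s = \int_0^{\|f\|_\infty} \HH^{n-1}\left(\partial\{f>s\}\right)\dd s$, and each superlevel set $\{f > s\}$ is a bounded convex set (bounded because $\int f < \infty$ forces $f \to 0$ at infinity). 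The remaining point is to bound $\int_0^{\|f\|_\infty}\per(\{f>s\})\,\dd s$ by a finite quantity; here one can use that the perimeter of a convex body contained in a fixed large ball $R B_2^n$ is at most $\per(RB_2^n) = n\omega_n R^{n-1}$ together with a crude estimate on how fast the superlevel sets shrink, or — cleaner — invoke that $f$ being log-concave and integrable is Lipschitz off a neighborhood of its maximum after a suitable reduction, so $\TV(f)$ is controlled by $\int f$ and the geometry of $K_f$. This moment computation is the step I expect to be the main obstacle, precisely because the superlevel sets can be unbounded-looking near the top value and $K_f$ itself may be unbounded; the honest argument will need the linear growth of $\phi$ to control $\per(\{f > s\})$ as $s \to 0^+$, and I would isolate that as a short lemma (the function $t \mapsto \per(\{f > e^{-t}\})$ decays fast enough against the linear lower bound on $\phi$).

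For finiteness of $\nu_f$, recall $\nu_f = \left(n_{K_f}\right)_\sharp\left(f\,\dd\!\left.\HH^{n-1}\right|_{\partial K_f}\right)$, so $\nu_f(\SS^{n-1}) = \int_{\partial K_f} f\,\dd\HH^{n-1}$. If $K_f$ is bounded this is at most $\|f\|_\infty \HH^{n-1}(\partial K_f) < \infty$. If $K_f$ is unbounded, the point is that $f$ restricted to $\partial K_f$ is itself a log-concave function decaying at least exponentially (again from the linear growth of $\phi = -\log f$, which restricts to a convex function on the boundary with the same growth), and a log-concave function on an $(n-1)$-dimensional Lipschitz hypersurface that decays exponentially has finite integral against $\HH^{n-1}$ — one slices $\partial K_f$ by its recession structure and bounds each slice. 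Concretely I would fix a direction in the recession cone, foliate $\partial K_f$ accordingly, and reduce to the one-dimensional fact that $\int_0^\infty e^{-ct}\,\dd t < \infty$. The estimate $0 < \int f$ is used only to ensure $\|f\|_\infty > 0$ so that the measures are not trivially zero; the substantive content is entirely the upper bounds, all flowing from $\int f < \infty \Rightarrow$ $\phi$ has linear growth.
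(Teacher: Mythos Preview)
Your treatment of the total mass and first moment of $\mu_f$ matches the paper exactly: the mass is immediate, and the first moment is identified as $\int|\nabla f|$. The paper simply cites this last finiteness as Lemma~4 of Cordero-Erausquin--Klartag rather than re-proving it via coarea as you sketch, but your outline is a reasonable way to establish that lemma.

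For $\nu_f$ your route differs from the paper's. You propose to analyze the geometry of $\partial K_f$ directly, foliating by the recession structure and integrating the exponential decay slice by slice. The paper instead uses a trick attributed to Ball that avoids any analysis of $\partial K_f$: starting from the same bound $f(x)\le A e^{-c|x|}$, one writes $e^{-c|x|}=c\int_0^\infty e^{-ct}\oo_{tB}(x)\,\dd t$, so that
\[
\int_{\partial K_f} f\,\dd\HH^{n-1}\le Ac\int_0^\infty e^{-ct}\,\HH^{n-1}\!\left(\partial K_f\cap tB\right)\dd t,
\]
and then bounds $\HH^{n-1}(\partial K_f\cap tB)\le \HH^{n-1}(\partial(K_f\cap tB))\le \HH^{n-1}(\partial(tB))$ by the monotonicity of surface area for convex bodies. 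This yields finiteness in one line, with no need to understand the recession cone or to parametrize $\partial K_f$. Your approach should also work in principle, but the details of the foliation when the recession cone has dimension greater than one are not spelled out, and the paper's argument buys a cleaner and more uniform bound.
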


\begin{proof}
$\mu_{f}$ is finite by definition, as $\int_{\RR^{n}}\dd\mu_{f}=\int f\dd x$.
Moreover, we have 
\[
\int\left|x\right|\dd\mu_{f}=\int\left|\nabla\phi\right|f\dd x=\int\left|\nabla f\right|\dd x<\infty,
\]
 where the last inequality is part of Lemma 4 of \cite{Cordero-Erausquin2015}. 

Next we show that $\nu_{f}$ is finite. Note that this is not entirely
trivial since $\int\dd\nu_{f}=\int_{\partial K_{f}}f\dd\HH^{n-1}$,
and while $f$ is clearly bounded we can have $\HH^{n-1}\left(\partial K_{f}\right)=\infty$.
We therefore adapt a simple argument of Ball (\cite{Ball1993}). Since
$f$ is log-concave and integrable, there exists constants $A,c>0$
such that $f(x)\le Ae^{-c\left|x\right|}$ (see e.g. Lemma 2.1 of
\cite{Klartag2007}). Note that for all $x\in\RR^{n}$ we have
\[
e^{-c\left|x\right|}=c\int_{0}^{\infty}e^{-ct}\oo_{tB}(x)\dd t,
\]
 where $B=\left\{ x:\ \left|x\right|\le1\right\} $ is the unit ball.
We may therefore compute 
\begin{align*}
\int\dd\nu_{f} & =\int_{\partial K_{f}}f\dd\HH^{n-1}\le A\int_{\partial K_{f}}e^{-c\left|x\right|}\dd\HH^{n-1}=Ac\int_{0}^{\infty}\int_{\partial K_{f}}e^{-ct}\oo_{tB}(x)\dd\HH^{n-1}(x)\dd t\\
 & =Ac\cdot\int_{0}^{\infty}e^{-ct}\HH^{n-1}\left(\partial K_{f}\cap tB\right)\dd t\le Ac\cdot\int_{0}^{\infty}e^{-ct}\HH^{n-1}\left(\partial\left(K_{f}\cap tB\right)\right)\dd t\\
 & \le Ac\cdot\int_{0}^{\infty}e^{-ct}\HH^{n-1}\left(\partial\left(tB\right)\right)\dd t=Ac\cdot\HH^{n-1}\left(\SS^{n-1}\right)\cdot\int_{0}^{\infty}t^{n-1}e^{-ct}\dd t<\infty,
\end{align*}
which is what we wanted to prove. Note that the last inequality holds
since $K_{f}\cap tB\subseteq tB$ and surface area is monotone for
convex bodies. 
\end{proof}
In particular, it follows that the equality in \eqref{eq:main-formula}
is an equality of finite quantities whenever $g$ is compactly supported,
as in this case $h_{g}\le A\left|x\right|+B$ and $h_{K_{g}}$ is
bounded on $\SS^{n-1}$. When $g$ is not compactly supported it is
possible to have $\delta(f,g)=\infty$, as already mentioned. 

\section{\label{sec:anisotropic-total-variations}Anisotropic total variations}

In order to start our proof, we need the notion of the anisotropic
total variation. First recall the classical (isotropic) total variation:
An integrable function $f:\RR^{n}\to\RR$ is said to have bounded
variation if 
\[
\sup\left\{ \int_{\RR^{n}}f\div\Phi\dd x:\ \begin{array}{l}
\Phi:\RR^{n}\to\RR^{n}\text{ is }C^{1}\text{-smooth, compactly}\\
\text{supported and }\left|\Phi(x)\right|\le1\text{ for all }x\in\RR^{n}
\end{array}\right\} <\infty.
\]
 This supremum is then known as the total variation of $f$, which
we will denote by $\TV(f)$. Moreover, if $f$ is of bounded variation
then there exists a vector-valued measure $Df$ on $\RR^{n}$ such
that 
\[
\int_{\RR^{n}}f\div\Phi\dd x=-\int_{\RR^{n}}\left\langle \Phi,\dd\left(Df\right)\right\rangle ,
\]
 and $\TV(f)=\left|Df\right|\left(\RR^{n}\right)$. Here $\left|Df\right|$
denotes the total variation (in the sense of measures) of $Df$. 

A set $A\subseteq\RR^{n}$ is said to have finite perimeter if $\oo_{A}$
has finite variation, and we define $\per(A)=\TV\left(\oo_{A}\right)$.
Finally, the coarea formula states that if $f$ has bounded variation
then $F_{s}=\left\{ x:\ f(x)\ge s\right\} $ has finite perimeter
for almost every $s$, and $\TV(f)=\int_{-\infty}^{\infty}\per(F_{s})\dd s$.
All of these facts are standard \textendash{} see e.g. Chapter 5 of
\cite{Evans1992} for proofs. 

It is less well known that the role of Euclidean norm in the definition
of $\TV(f)$ is not essential. Fix a convex body $L\subseteq\RR^{n}$
and assume that $0$ belongs to the interior of $L$. Then the (non-symmetric)
norm
\[
\left\Vert x\right\Vert _{L}=\inf\left\{ \lambda>0:\ \frac{x}{\lambda}\in L\right\} 
\]
 is equivalent to the Euclidean norm. We then define:
\begin{defn}
\begin{enumerate}
\item Let $f:\RR^{n}\to\RR^{n}$ be an integrable function. Then the $L$-total
variation of $f$ is given by 
\[
\TV_{L}(f)=\sup\left\{ \int_{\RR^{n}}f\div\Phi\dd x:\ \begin{array}{l}
\Phi:\RR^{n}\to\RR^{n}\text{ is }C^{1}\text{-smooth, compactly}\\
\text{supported and }\left\Vert \Phi(x)\right\Vert _{L}\le1\text{ for all }x\in\RR^{n}
\end{array}\right\} .
\]
\item Let $A\subseteq\RR^{n}$ be a measurable set. The $L$-perimeter of
$A$ is defined by $\per_{L}(A)=\TV_{L}(\oo_{A}).$ 
\end{enumerate}
Since $\left\Vert \cdot\right\Vert _{L}$ and $\left|\cdot\right|$
are equivalent the notion of ``bounded variation'' does not depend
on $L$, and $\TV_{L}(f)<\infty$ if and only if $\TV(f)<\infty$.
Of course, the variation itself does depend on $L$.

The theory of anisotropic total variations is analogous to the standard
theory. We now cite two results that we will require. We were only
able to find as reference the technical report \cite{Grasmair2010},
where these results are proven by Grasmair, but the results can be
proved in the same way as the classical proofs that appear e.g. in
\cite{Evans1992}:
\end{defn}

\begin{prop}
\label{prop:TV-def}Let $f:\RR^{n}\to\RR$ be of bounded variation.
Write the vector valued measure $Df$ as $Df=\sigma\mu$ where $\mu$
is a positive measure and $\sigma:\RR^{n}\to\RR^{n}$ satisfies $h_{L}(\sigma(x))=1$
for all $x\in\RR^{n}$. Then $\TV_{L}(f)=\mu\left(\RR^{n}\right).$ 
\end{prop}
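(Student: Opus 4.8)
The plan is to mirror the classical derivation of $\TV(f)=\left|Df\right|\left(\RR^{n}\right)$ from \cite{Evans1992}, replacing the Euclidean norm by $\left\Vert\cdot\right\Vert_{L}$ at every step. The decomposition in the statement is nothing but the usual polar decomposition rescaled: if $Df=\sigma_{0}\left|Df\right|$ with $\left|\sigma_{0}(x)\right|=1$ for $\left|Df\right|$-a.e.\ $x$, then since $0\in\operatorname{int}L$ and the two norms are equivalent, $h_{L}(\sigma_{0}(x))$ lies between two positive constants, so $\sigma(x)=\sigma_{0}(x)/h_{L}(\sigma_{0}(x))$ and $\dd\mu=h_{L}(\sigma_{0}(x))\,\dd\left|Df\right|$ define a finite positive measure $\mu$ with $Df=\sigma\mu$ and $h_{L}(\sigma(x))=1$ $\mu$-a.e.\ (the orientation of $\sigma$ being fixed by the sign convention in $\int f\div\Phi=-\int\langle\Phi,\dd(Df)\rangle$). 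It then remains to prove $\TV_{L}(f)\le\mu(\RR^{n})$ and the reverse.

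The upper bound is immediate. For an admissible test field $\Phi$ the defining property of $Df$ gives
\[
\int_{\RR^{n}}f\div\Phi\,\dd x=-\int_{\RR^{n}}\langle\Phi,\sigma\rangle\,\dd\mu ,
\]
and the constraint $\left\Vert\Phi(x)\right\Vert_{L}\le1$ (i.e.\ $\Phi(x)\in L$) together with the fact that $h_{L}$ is the support function of $L$ yields the pointwise estimate $-\langle\Phi(x),\sigma(x)\rangle\le h_{L}(\sigma(x))=1$. Integrating against the positive measure $\mu$ and taking the supremum over $\Phi$ gives $\TV_{L}(f)\le\mu(\RR^{n})$.

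For the reverse inequality I would build near-optimal test fields. Since $L$ is compact, the value $h_{L}(\sigma(x))=1$ is attained at an exposed point, and a routine measurable selection yields a bounded Borel map $v:\RR^{n}\to L$ with $-\langle v(x),\sigma(x)\rangle=1$ for $\mu$-a.e.\ $x$, so that formally $-\int\langle v,\sigma\rangle\,\dd\mu=\mu(\RR^{n})$. As $v$ is not a legitimate test field, the point is to approximate it: given $\epsilon>0$, Lusin's theorem applied to the finite measure $\mu$ produces a continuous $\RR^{n}$-valued map agreeing with $v$ off a set of $\mu$-measure $<\epsilon$; one retracts this map into $L$, mollifies, and multiplies by a smooth compactly supported cutoff equal to $1$ on a large ball, obtaining $\Phi_{\epsilon}$ that is $C^{1}$, compactly supported, and satisfies $\left\Vert\Phi_{\epsilon}\right\Vert_{L}\le1$ — here convexity of $L$ and $0\in L$ are used, since both mollification and multiplication by a number in $[0,1]$ keep values in $L$. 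Arranging $\Phi_{\epsilon}\to v$ $\mu$-a.e.\ along a sequence, dominated convergence (valid because $\left|\Phi_{\epsilon}\right|$ is uniformly bounded and $\mu$ is finite) gives $-\int\langle\Phi_{\epsilon},\sigma\rangle\,\dd\mu\to\mu(\RR^{n})$, and since $\TV_{L}(f)\ge\int f\div\Phi_{\epsilon}=-\int\langle\Phi_{\epsilon},\sigma\rangle\,\dd\mu$, letting $\epsilon\to0$ concludes.

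I expect the only real obstacle to be this approximation, and within it the fact that $\mu$ need not be absolutely continuous with respect to Lebesgue measure: one cannot simply invoke continuity of mollification in $L^{1}(\mu)$, so $\mu$-a.e.\ control of the approximants has to be extracted through Lusin's theorem while simultaneously keeping every intermediate field inside $L$. Everything else is the verbatim anisotropic analogue of the Euclidean computation in \cite{Evans1992} (and of the treatment in \cite{Grasmair2010}), so in the write-up I would keep it brief and refer to those sources for the routine parts.
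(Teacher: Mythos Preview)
The paper does not give its own proof of this proposition: it is quoted from \cite{Grasmair2010} with the remark that the argument is the anisotropic analogue of the classical one in \cite{Evans1992}. Your plan is precisely that analogue, so in that sense it matches the paper exactly.

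There is, however, a sign slip in your execution that matters because $L$ is allowed to be non-symmetric. With the decomposition you set up, $\sigma=\sigma_{0}/h_{L}(\sigma_{0})$ points in the direction of $Df$, and for $\Phi(x)\in L$ the support-function inequality reads $\langle\Phi,\sigma\rangle\le h_{L}(\sigma)=1$, \emph{not} $-\langle\Phi,\sigma\rangle\le h_{L}(\sigma)$. What you actually need to bound above is $-\langle\Phi,\sigma\rangle=\langle\Phi,-\sigma\rangle\le h_{L}(-\sigma)$, and $h_{L}(-\sigma)=1$ only when $L=-L$. The same issue recurs in your lower bound: a selector $v(x)\in L$ with $-\langle v(x),\sigma(x)\rangle=1$ exists only if $h_{L}(-\sigma(x))=1$. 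The repair is purely cosmetic---normalize by $h_{L}(-\sigma_{0})$ instead of $h_{L}(\sigma_{0})$ (equivalently, polar-decompose $-Df$ rather than $Df$)---after which both halves of your argument go through verbatim. This corrected normalization is also the one that makes the paper's subsequent application in the proof of Proposition~\ref{prop:result-bodies} come out right.
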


Note that when $L$ is the Euclidean ball the measure $\mu$ is exactly
$\left|Df\right|$, the usual total variation of $Df$. In the general
case $\mu$ can be considered as ``total variation of $Df$ with
respect to $L$''. Also note that $\TV_{L}(f)$ was defined using
the norm $\left\Vert \cdot\right\Vert _{L}$, but in Proposition \ref{prop:TV-def}
the norm that appears in the dual norm $h_{L}$. 

We will also need the anisotropic coarea formula:
\begin{thm}
\label{thm:coarea}Fix an integrable $f:\RR^{n}\to\RR$, and denote
its level sets by $F_{s}=\left\{ x:\ f(x)\ge s\right\} $. Then
\[
\TV_{L}(f)=\int_{-\infty}^{\infty}\per_{L}(F_{s})\dd s.
\]
\end{thm}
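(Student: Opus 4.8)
The plan is to mimic the classical proof of the coarea formula for the total variation, keeping careful track of the anisotropy. First I would establish the inequality $\TV_L(f) \le \int_{-\infty}^\infty \per_L(F_s)\,\dd s$ using the layer-cake (Cavalieri) decomposition $f = \int_0^\infty \oo_{F_s}\,\dd s - \int_{-\infty}^0 (1 - \oo_{F_s})\,\dd s$. For any admissible test vector field $\Phi$ with $\lVert \Phi(x)\rVert_L \le 1$, one writes $\int_{\RR^n} f \div\Phi\,\dd x = \int_{-\infty}^\infty \left(\int_{\RR^n} \oo_{F_s}\div\Phi\,\dd x\right)\dd s$ by Fubini (the constant-shift terms integrate to zero against a divergence of a compactly supported field), and each inner integral is bounded by $\per_L(F_s)$ by definition. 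Taking the supremum over $\Phi$ gives the desired inequality; this direction requires only Fubini's theorem and is essentially automatic.

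The reverse inequality $\TV_L(f) \ge \int_{-\infty}^\infty \per_L(F_s)\,\dd s$ is the substantive part. The standard strategy is to first prove it for smooth $f$ — or for $f$ of bounded variation via a smooth approximation — and then pass to the limit. For smooth $f$, one uses the anisotropic generalization of the identity $\lvert Df\rvert(\RR^n) = \int_{-\infty}^\infty \per(F_s)\,\dd s$: by Proposition \ref{prop:TV-def}, $\TV_L(f)$ equals the total mass of the measure $\mu$ in the polar decomposition $Df = \sigma\mu$ with $h_L(\sigma) = 1$ a.e., and for a.e. $s$ the reduced boundary $\partial^* F_s$ has outer normal pointing in the direction of $\sigma$, so that $\per_L(F_s) = \int_{\partial^* F_s} h_L(\sigma)\,\dd\HH^{n-1}$. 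Integrating in $s$ and invoking the classical coarea formula for $\HH^{n-1}$-rectifiable level sets recovers $\mu(\RR^n)$. For general $f$ of bounded variation one approximates by mollification $f_\varepsilon = f * \rho_\varepsilon$, uses that $\TV_L$ is lower semicontinuous under $L^1$ convergence (a consequence of its definition as a supremum of continuous linear functionals) together with $\TV_L(f_\varepsilon) \to \TV_L(f)$, and controls the level sets of $f_\varepsilon$ via Fatou's lemma applied to $s \mapsto \per_L(\{f_\varepsilon \ge s\})$.

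The main obstacle I anticipate is the measurability and a.e.-finiteness of the map $s \mapsto \per_L(F_s)$, together with the careful handling of the set of "bad" levels $s$ where $F_s$ fails to have finite perimeter or where its boundary is not nicely rectifiable — exactly the subtlety that is already present (and resolved) in the isotropic case in \cite{Evans1992}, Chapter 5. Since the equivalence of norms gives $\per_L(A) < \infty \iff \per(A) < \infty$ and even two-sided bounds $c\,\per(A) \le \per_L(A) \le C\,\per(A)$, the exceptional set of levels is the same as in the classical theorem, so all the measure-theoretic bookkeeping transfers verbatim; the only genuinely new ingredient is the polar-decomposition identity $\per_L(F_s) = \int_{\partial^* F_s} h_L(\nu_{F_s})\,\dd\HH^{n-1}$ for the anisotropic perimeter of a set of finite perimeter, which follows from Proposition \ref{prop:TV-def} applied to $\oo_{F_s}$ and the De Giorgi structure theorem. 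Given these, the argument is a routine adaptation, which is why I would cite \cite{Grasmair2010} and \cite{Evans1992} for the details rather than reproduce them in full.
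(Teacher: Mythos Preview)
Your proposal is correct and matches the paper's treatment: the paper does not give its own proof of Theorem~\ref{thm:coarea} but simply cites \cite{Grasmair2010} and remarks that the argument is the same as the classical one in \cite{Evans1992}. Your outline is precisely that adaptation --- layer-cake plus Fubini for one direction, smooth approximation and the polar-decomposition/De Giorgi structure theorem for the other --- so there is nothing to add.
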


Our goal for this section is to prove Theorem \ref{thm:main-thm}
when $g=\oo_{L}$ , the indicator of a convex body. We will do so
by proving that in this case Theorem \ref{thm:main-thm} is essentially
equivalent to Theorem \ref{thm:coarea}. We begin with finding an
alternative formula for $\delta(f,g)$ in this case. Recall that if
$K,L\subseteq\RR^{n}$ are convex bodies then the volume $\left|K+tL\right|$
for $t\ge0$ can be written as a polynomial,
\begin{equation}
\left|K+tL\right|=\sum_{k=0}^{n}\binom{n}{k}W_{k}(K,L)t^{k}.\label{eq:steiner}
\end{equation}
The non-negative coefficients $W_{k}(K,L)$ are known in our normalization
as the relative quermassintegrals of $K$ with respect to $L$. Formula
\eqref{eq:steiner} is a special case of the celebrated Minkowski
theorem, and for the proof and basic properties of the relative quermassintegrals
we refer the reader again to \cite{Schneider2013} or \cite{Hug2020}.
For now we just note that $W_{0}(K,L)=\left|K\right|$. 

We now prove: 
\begin{prop}
\label{prop:quer-repr}Fix $f\in\lc_{n}$ with $0<\int f<\infty$
and fix a compact convex body $L\subseteq\RR^{n}$. For every $s>0$
we write $F_{s}=\left\{ x\in\RR^{n}:\ f(x)\ge s\right\} $. Then 
\[
\delta(f,\oo_{L})=n\int_{0}^{\infty}W_{1}(F_{s},L)\dd s.
\]
\end{prop}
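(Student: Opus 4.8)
The plan is to unwind the definition of $\delta(f,\oo_L)$ and rewrite $f \star (t\cdot\oo_L)$ in terms of the level sets of $f$. First I would observe that for a log-concave $f = e^{-\phi}$ and the indicator $\oo_L$ (so that $t\cdot\oo_L = \oo_{tL}$), the sup-convolution satisfies
\[
\bigl(f\star\oo_{tL}\bigr)(x) = \sup_{y\in tL} f(x-y) = \sup_{z\in x - tL} f(z).
\]
Since $\{x : (f\star\oo_{tL})(x) \ge s\} = \{x : (x-tL)\cap F_s \ne\emptyset\} = F_s + tL$ (using that $-tL = tL$ only if $L$ is symmetric — more carefully one gets $F_s + t(-L)$, but since we may replace $L$ by $-L$ throughout, or simply track the sign, this is harmless), the layer-cake formula gives
\[
\int f\star\oo_{tL} = \int_0^\infty \bigl|\{x : (f\star\oo_{tL})(x)\ge s\}\bigr|\,\dd s = \int_0^\infty |F_s + tL|\,\dd s.
\]
Subtracting $\int f = \int_0^\infty |F_s|\,\dd s$ and dividing by $t$, the Steiner-type expansion \eqref{eq:steiner} applied to each $F_s$ (which is a bounded convex set, possibly empty, for $s>0$ since $\int f < \infty$ forces $f$ to decay) yields
\[
\frac{|F_s + tL| - |F_s|}{t} = n W_1(F_s,L) + \sum_{k=2}^n \binom{n}{k} W_k(F_s,L) t^{k-1}.
\]

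Next I would justify passing the limit $t\to 0^+$ inside the integral $\int_0^\infty(\cdot)\,\dd s$. The integrand converges pointwise in $s$ to $n W_1(F_s,L)$. For the dominating function, monotonicity gives $|F_s + tL| - |F_s| \le |F_s + L| - |F_s|$ for $0 < t \le 1$, and one can bound $|F_s + L|$ using $F_s \subseteq F_s + L \subseteq (\mathrm{diam}\, F_1 + \mathrm{diam}\, L)B$ once $s$ is bounded below, together with the exponential decay estimate $f(x)\le A e^{-c|x|}$ (cited already in the proof of Proposition \ref{prop:finiteness}) to control $|F_s|$ and its neighbourhood for all $s$; this shows $s\mapsto \frac{|F_s+tL|-|F_s|}{t}$ is dominated by an integrable function uniformly in $t\in(0,1]$, so dominated convergence applies. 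Combined with the fact that the limit $\delta(f,\oo_L)$ exists (Colesanti–Fragalà, or directly from monotonicity of the difference quotient in $t$, which follows from the Brunn–Minkowski-type concavity built into \eqref{eq:steiner}), this gives $\delta(f,\oo_L) = n\int_0^\infty W_1(F_s,L)\,\dd s$.

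The main obstacle I anticipate is the interchange of limit and integral, i.e.\ producing a clean integrable dominating function valid simultaneously for all small $t$; the geometry is easy for each fixed $s$ but one must handle both $s\to 0^+$ (where $F_s$ swells toward $K_f$, which may be unbounded in measure-negligible ways but here has $|K_f|<\infty$) and $s\to \sup f$ (where $F_s$ shrinks to a point or empty set). A secondary point worth stating carefully is the sign/orientation convention in $t\cdot\oo_L = \oo_{tL}$ versus $\oo_{-tL}$, and the fact that $W_1(F_s,L) = W_1(F_s,-L)$ is \emph{not} generally true — but since the definition of $\delta$ and the final formula \eqref{eq:main-formula} both involve $\oo_L$ consistently, and $h_{\oo_L}$, $h_{K_{\oo_L}} = h_L$ match up, the bookkeeping closes. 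Everything else is a routine application of the layer-cake formula and the polynomiality \eqref{eq:steiner}.
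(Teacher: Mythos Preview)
Your approach is essentially the paper's: layer-cake decomposition, Steiner expansion, and dominated convergence with the majorant $|F_s+L|-|F_s|$. Two simplifications worth noting: the level-set identity is exactly $\{f\star\oo_{tL}\ge s\}=F_s+tL$ with no sign ambiguity, since $(x-tL)\cap F_s\ne\emptyset \iff x\in F_s+tL$; and the integrability of your dominating function---which you flag as the main obstacle---follows in one line by layer-cake again, $\int_0^\infty\bigl(|F_s+L|-|F_s|\bigr)\,\dd s=\int f\star\oo_L-\int f<\infty$, so no diameter or decay estimates are needed.
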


This result essentially appears in \cite{Bobkov2014}, at least in
the case when $L$ is the unit ball. Nonetheless we present its short
proof:
\begin{proof}
For brevity we define $f_{t}=f\star\left(t\cdot\oo_{L}\right)=f\star\oo_{tL}$
and $F_{s}^{(t)}=\left\{ x\in\RR^{n}:\ f_{t}(x)\ge s\right\} $. It
is immediate from the definition of $f_{t}$ that $F_{s}^{(t)}=F_{s}+tL$.
By layer cake decomposition we have 
\[
\delta(f,g)=\lim_{t\to0^{+}}\frac{\int f_{t}-\int f}{t}=\lim_{t\to0^{+}}\frac{\int_{0}^{\infty}\left|F_{s}^{(t)}\right|\dd s-\int_{0}^{\infty}\left|F_{s}\right|\dd s}{t}=\lim_{t\to0^{+}}\int_{0}^{\infty}\frac{\left|F_{s}+tL\right|-\left|F_{s}\right|}{t}\dd s.
\]
Using \eqref{eq:steiner} we see that for $0<t<1$ we have
\[
0\le\frac{\left|F_{s}+tL\right|-\left|F_{s}\right|}{t}=\sum_{k=1}^{n}\binom{n}{k}W_{k}(F_{s},L)t^{k-1}\le\sum_{k=1}^{n}\binom{n}{k}W_{k}(F_{s},L)=\left|F_{s}+L\right|-\left|F_{s}\right|.
\]
Since $\int_{0}^{\infty}\left(\left|F_{s}+L\right|-\left|F_{s}\right|\right)\dd s=\int f_{1}-\int f<\infty$,
we can use the dominated convergence theorem to conclude that 
\[
\delta(f,g)=\int_{0}^{\infty}\left(\lim_{t\to0^{+}}\frac{\left|F_{s}+tL\right|-\left|F_{s}\right|}{t}\right)\dd s=n\int_{0}^{\infty}W_{1}(F_{s},L)\dd s.
\]
\end{proof}
We will also need one more identity, which was proved in \cite{Rotem2021}
as part of Theorem 3.2:
\begin{prop}
\label{prop:variation-measure}Let $\Phi:\RR^{n}\to\RR^{n}$ denote
a $C^{1}$-smooth compactly supported vector field. Fix $f\in\lc_{n}$
with $0<\int f<\infty$. Then 
\[
\int_{\RR^{n}}f\div\Phi\dd x=-\int_{\RR^{n}}\left\langle \nabla f,\Phi\right\rangle \dd x+\int_{\partial K_{f}}f\left\langle \Phi,n_{K_{f}}\right\rangle \dd\HH^{n-1}.
\]
\end{prop}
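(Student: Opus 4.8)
The plan is to read the asserted identity as the divergence theorem applied to the vector field $f\Phi$ on the support $K_f$; what prevents a one-line proof is that $f$ need not be continuous up to $\partial K_f$, which is exactly why the boundary term carries the pointwise values of $f$. I would prove it by exhausting $\operatorname{int}K_f$ by homothetic copies of $K_f$, on each of which $f$ is Lipschitz so that the classical divergence theorem applies, and then letting the copies increase to $\operatorname{int}K_f$. The interior integrals will converge by routine domination; the convergence of the boundary integral is the crux, since it is where the absence of regularity is felt, and it works only because log-concavity forces $f$ to be approached correctly along rays while upper semi-continuity caps it from above, and because a convex body has finite surface area.

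\emph{Reduction to a bounded support.} Since $\Phi$ is compactly supported, fix a closed ball $\bar B$ whose interior contains $\operatorname{supp}\Phi$ and replace $f$ by $\tilde f=f\cdot\oo_{\bar B}$. Then $\tilde f\in\lc_n$, $0<\int\tilde f<\infty$ for $\bar B$ large, and $\overline{K_{\tilde f}}$ is a convex body; moreover none of the three integrals changes. Indeed $\tilde f=f$ on the interior of $\bar B$, so the two volume integrals and the a.e.\ gradient are unchanged there, while $\div\Phi$, $\Phi$ and $\nabla f$ vanish outside $\bar B$; the part of $\partial K_{\tilde f}$ lying on $\partial\bar B$ contributes nothing since $\Phi\equiv0$ there; and on $\partial K_f\cap\operatorname{int}\bar B$ the Gauss maps of $K_{\tilde f}$ and $K_f$ agree. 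So we may assume $\overline{K_f}$ is a convex body and, after a translation, that $0\in\operatorname{int}K_f$.

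\emph{Exhaustion and the divergence theorem.} For $\delta\in(0,1)$ set $K_\delta=(1-\delta)\overline{K_f}$. Each $K_\delta$ is a convex body contained in $\operatorname{int}K_f=\operatorname{int}(\dom(-\log f))$ (each point $(1-\delta)x$ being a positive convex combination of $x\in\overline{K_f}$ with the interior point $0$), and $\bigcup_{\delta>0}K_\delta=\operatorname{int}K_f$. Since a convex function is locally Lipschitz on the interior of its domain, $-\log f$ and hence $f$ is Lipschitz on the convex body $K_\delta$, and therefore so is $f\Phi$. The divergence theorem for Lipschitz vector fields on a convex body --- obtained by mollification, or from the Gauss--Green theorem for sets of finite perimeter, cf.\ \cite{Evans1992} --- gives
\[
\int_{K_\delta}f\div\Phi\,\dd x+\int_{K_\delta}\langle\nabla f,\Phi\rangle\,\dd x=\int_{\partial K_\delta}f\langle\Phi,n_{K_\delta}\rangle\,\dd\HH^{n-1}.
\]
The homothety $x\mapsto(1-\delta)x$ carries $\partial K_f$ onto $\partial K_\delta$, multiplies $\HH^{n-1}$ by $(1-\delta)^{n-1}$, and satisfies $n_{K_\delta}((1-\delta)x)=n_{K_f}(x)$ wherever the Gauss map is defined, so the right-hand side equals $(1-\delta)^{n-1}\int_{\partial K_f}f((1-\delta)x)\langle\Phi((1-\delta)x),n_{K_f}(x)\rangle\,\dd\HH^{n-1}(x)$.

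\emph{Passage to the limit.} Let $\delta\to0^{+}$. On the left, $\oo_{K_\delta}f\div\Phi\to f\div\Phi$ and $\oo_{K_\delta}\langle\nabla f,\Phi\rangle\to\langle\nabla f,\Phi\rangle$ pointwise almost everywhere, dominated respectively by $\|\div\Phi\|_\infty\,f\in L^1$ and $\|\Phi\|_\infty\,|\nabla f|\in L^1$, where $\int|\nabla f|<\infty$ is the Cordero-Erausquin--Klartag bound already used in the proof of Proposition \ref{prop:finiteness}; hence the volume integrals tend to $\int_{\RR^n}f\div\Phi$ and $\int_{\RR^n}\langle\nabla f,\Phi\rangle$. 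On the right, $f$ is bounded (being log-concave and integrable) and $\HH^{n-1}(\partial K_f)=\per(K_f)<\infty$, so the integrand is dominated by the constant $\|f\|_\infty\|\Phi\|_\infty$; since $(1-\delta)^{n-1}\to1$ and $\Phi$ is continuous, dominated convergence identifies the limit as $\int_{\partial K_f}f\langle\Phi,n_{K_f}\rangle\,\dd\HH^{n-1}$ provided $f((1-\delta)x)\to f(x)$ for $\HH^{n-1}$-almost every $x\in\partial K_f$. This last fact holds at \emph{every} $x\in\partial K_f$: log-concavity gives $f((1-\delta)x)\ge f(0)^{\delta}f(x)^{1-\delta}$, whose limit is $f(x)$ since $0<f(0)<\infty$, while upper semi-continuity of $f$ together with $(1-\delta)x\to x$ gives $\limsup_{\delta\to0}f((1-\delta)x)\le f(x)$. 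Combining the three limits proves the identity, and the boundary term above is the only step where the lack of regularity had to be handled by hand.
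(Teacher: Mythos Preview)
The paper does not give its own proof of this proposition; it simply records the statement and cites \cite{Rotem2021}, Theorem 3.2, for the proof. So there is no in-paper argument to compare against.

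Your argument is correct and self-contained. The reduction to bounded support is harmless since $\Phi$ vanishes near $\partial\bar B$, and after translating so that $0\in\operatorname{int}K_f$ the homothetic exhaustion $K_\delta=(1-\delta)\overline{K_f}$ produces compact sets on which $f$ is Lipschitz, so the divergence theorem for Lipschitz vector fields on Lipschitz domains applies cleanly. The passage to the limit on the volume side is routine dominated convergence using $f\in L^1$ and $\int|\nabla f|<\infty$. The boundary term is the delicate part, and you handle it correctly: the homothety rewrites the integral over $\partial K_\delta$ as an integral over $\partial K_f$, the surface area $\HH^{n-1}(\partial K_f)$ is finite after the reduction to a convex body, and the pointwise convergence $f((1-\delta)x)\to f(x)$ holds at \emph{every} $x\in\partial K_f$ because log-concavity gives $f((1-\delta)x)\ge f(0)^\delta f(x)^{1-\delta}\to f(x)$ while upper semi-continuity gives the matching upper bound. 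This radial-limit observation is exactly the mechanism that makes the boundary term pick up the pointwise boundary values of $f$ despite the lack of continuity.
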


Armed with these tools, we can finally prove:
\begin{prop}
\label{prop:result-bodies}Fix $f\in\lc_{n}$ with $0<\int f<\infty$,
and set $g=c\cdot\oo_{L}$ for a compact convex set $L\subseteq\RR^{n}$
and $c>0$. Then
\[
\delta(f,g)=\int_{\RR^{n}}h_{g}\dd\mu_{f}+\int_{\SS^{n-1}}h_{K_{g}}\dd\nu_{f}.
\]
 
\end{prop}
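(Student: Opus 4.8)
The plan is to start from Proposition~\ref{prop:quer-repr}, rewrite its right-hand side as an exact level-set computation, and then read off $\mu_f$ and $\nu_f$ via the coarea formula. Since $(c\cdot\oo_L)(x)=\oo_L(x/c)^{c}=\oo_{cL}(x)$ and $h_{c\cdot\oo_L}=h_{cL}$, after replacing $L$ by $cL$ it suffices to prove the identity for $g=\oo_L$, where now $h_g=h_L$ on $\RR^n$ and $h_{K_g}=h_L$ on $\SS^{n-1}$. (We apply Proposition~\ref{prop:quer-repr} for this possibly lower-dimensional $L$; its proof only needs the expansion $|K+tL|=\sum_k\binom nk W_k(K,L)t^k$, which is valid for any compact convex $L$.) It is convenient, though not necessary, to translate $L$ so that $0\in\operatorname{int}L$ (when $L$ is full-dimensional), making $h_L>0$ off the origin; this changes neither side, as it only adds a linear functional to $h_L$ while $\int_{\RR^n}x\,\dd\mu_f(x)+\int_{\SS^{n-1}}\theta\,\dd\nu_f(\theta)=0$ by applying Proposition~\ref{prop:variation-measure} to the constant fields $\Phi\equiv e_i$ after a routine truncation. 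Finally, Proposition~\ref{prop:finiteness} — $\nu_f$ finite and $\mu_f$ of finite first moment — together with $h_L(x)\le C|x|$ and $|h_L|\le C$ on $\SS^{n-1}$ makes every integral below absolutely convergent, so all the exchanges of integration are justified.

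Next I would invoke Proposition~\ref{prop:quer-repr} to write $\delta(f,\oo_L)=n\int_0^\infty W_1(F_s,L)\,\dd s$ with $F_s=\{f\ge s\}$. As $f$ is upper semi-continuous, log-concave and $0<\int f<\infty$, its maximum $M$ is attained, and for every $s\in(0,M)$ the set $F_s$ is a genuine convex body (nonempty, compact, convex, and full-dimensional because $K_f$ is). The standard mixed-volume identity then gives $nW_1(F_s,L)=\int_{\SS^{n-1}}h_L\,\dd S_{F_s}=\int_{\partial F_s}h_L(n_{F_s})\,\dd\HH^{n-1}$, the last step being just the definition of $S_{F_s}$ as a push-forward. (When $0\in\operatorname{int}L$ this also equals $\per_L(F_s)=\TV_L(\oo_{F_s})$; combined with Theorem~\ref{thm:coarea}, and since $F_s=\RR^n$ contributes nothing for $s\le0$, this records the clean reformulation $\delta(f,\oo_L)=\TV_L(f)$, which is the conceptual reason the statement belongs in this section — but the computation does not rely on it.) Hence
\[
\delta(f,\oo_L)=\int_0^\infty\int_{\partial F_s}h_L(n_{F_s})\,\dd\HH^{n-1}\,\dd s.
\]

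The hard part, and where I expect the real work to lie, is the structure of $\partial F_s$. Using that a log-concave function is continuous on the interior of its support and cannot be locally constant at a value below its maximum, one should verify, for $s\in(0,M)$, the exact disjoint decomposition $\partial F_s=A_s\sqcup B_s$ with $A_s=\{f=s\}\cap\operatorname{int}K_f$ and $B_s=\{x\in\partial K_f:\ f(x)\ge s\}$, that the outer unit normal of $F_s$ equals $\nabla\phi/|\nabla\phi|$ on $A_s$ (writing $\phi=-\log f$, which is differentiable with $\nabla\phi\ne0$ at $\HH^{n-1}$-almost every point of $A_s$ for a.e.\ $s$, by the coarea formula), and equals $n_{K_f}$ on $B_s$. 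Granting this, the $B_s$-term is a layer-cake integration over $\partial K_f$,
\[
\int_0^\infty\int_{B_s}h_L(n_{K_f})\,\dd\HH^{n-1}\,\dd s=\int_{\partial K_f}f\,h_L(n_{K_f})\,\dd\HH^{n-1}=\int_{\SS^{n-1}}h_L\,\dd\nu_f,
\]
by the definition of $\nu_f$; and for the $A_s$-term the substitution $s=e^{-t}$ turns the $s$-integral into $\int_{-\infty}^\infty e^{-t}\int_{\{\phi=t\}}h_L(\nabla\phi)/|\nabla\phi|\,\dd\HH^{n-1}\,\dd t$, and the classical coarea formula applied to the locally Lipschitz convex function $\phi$ (with weight $e^{-\phi}h_L(\nabla\phi)/|\nabla\phi|$, using the $1$-homogeneity of $h_L$) identifies this with $\int_{\RR^n}h_L(\nabla\phi)\,e^{-\phi}\,\dd x=\int_{\RR^n}h_L\,\dd\mu_f$, by the definition of $\mu_f$. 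Summing the two contributions gives $\delta(f,\oo_L)=\int_{\RR^n}h_L\,\dd\mu_f+\int_{\SS^{n-1}}h_L\,\dd\nu_f$, which is the asserted identity; undoing the reductions completes the proof.

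The main obstacle I anticipate is exactly the boundary decomposition and the identification of $n_{F_s}$ used above, carried out for an arbitrary log-concave $f$ with no regularity whatsoever: the level sets $\{f=s\}$ need not be smooth, $\partial K_f$ can carry infinite $\HH^{n-1}$-measure, and $\phi$ is differentiable only Lebesgue-almost everywhere. The way through will be that the classical coarea formula for $\phi$ is precisely the tool that absorbs these difficulties — it is what makes the formal level-set calculation legitimate ``for a.e.\ $s$'', controls both the finiteness of $\HH^{n-1}(A_s)$ and the fact that $A_s$ meets the non-differentiability set of $\phi$ in an $\HH^{n-1}$-null set, and renders the exceptional null sets harmless — while Proposition~\ref{prop:finiteness} keeps everything convergent. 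Two lesser points I would also record in passing: that $\TV_L(\oo_K)=nW_1(K,L)$ for convex bodies $K$ (used, with Theorem~\ref{thm:coarea}, only for the reformulation $\delta(f,\oo_L)=\TV_L(f)$), and that $F_s=\RR^n$ is $L$-negligible in the anisotropic coarea formula.
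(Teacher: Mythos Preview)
Your approach is correct and takes a genuinely different route from the paper's. Both arguments start from Proposition~\ref{prop:quer-repr}, writing $\delta(f,\oo_L)=n\int_0^\infty W_1(F_s,L)\,\dd s$, but diverge from there. The paper treats the anisotropic total-variation machinery as a black box: Proposition~\ref{prop:variation-measure} (an integration-by-parts identity imported from \cite{Rotem2021}) identifies the vector measure $Df$, Proposition~\ref{prop:TV-def} then reads off $\TV_L(f)=\int h_L\,\dd\mu_f+\int h_L\,\dd\nu_f$, and the anisotropic coarea formula (Theorem~\ref{thm:coarea}) closes the loop via $\TV_L(f)=\int_0^\infty\per_L(F_s)\,\dd s=n\int_0^\infty W_1(F_s,L)\,\dd s$. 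You instead bypass the anisotropic theory entirely: you decompose $\partial F_s$ explicitly into the interior level piece $A_s=\{f=s\}\cap\operatorname{int}K_f$ and the support-boundary piece $B_s=\partial K_f\cap\{f\ge s\}$, recover the $\nu_f$-term from $B_s$ by a layer-cake integration over $\partial K_f$, and recover the $\mu_f$-term from $A_s$ via the \emph{classical} coarea formula applied to the locally Lipschitz convex function $\phi$. Your route is more elementary and makes the geometric origin of the two measures transparent, at the cost of having to verify by hand the boundary decomposition and the two normal identifications $n_{F_s}=n_{K_f}$ on $B_s$ and $n_{F_s}=\nabla\phi/|\nabla\phi|$ on $A_s$ --- precisely the work that the paper outsources to Proposition~\ref{prop:variation-measure} and Theorem~\ref{thm:coarea}. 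You correctly flag this verification as the technical crux and correctly identify the isotropic coarea formula (together with the Fubini-type observation that $\{x\in\partial K_f:f(x)=s\}$ is $\HH^{n-1}$-null for a.e.\ $s$) as the device that renders the exceptional sets harmless.
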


\begin{proof}
First note that if the result holds for a function $g$, it also holds
for $\widetilde{g}(x)=e^{c}\cdot g(x)$ for all $c\in\RR$. Indeed,
it is easy to check by the chain rule that $\delta(f,\widetilde{g})=\delta(f,g)+c\int f$
(see Proposition 3.4 in \cite{Rotem2021}). Since $h_{\widetilde{g}}=h_{g}+c$
and $h_{K_{g}}=h_{K_{\widetilde{g}}}$, we have 
\begin{align*}
\delta(f,\widetilde{g}) & =\delta(f,g)+c\int f=\int_{\RR^{n}}h_{g}\dd\mu_{f}+\int_{\SS^{n-1}}h_{K_{g}}\dd\nu_{f}+c\int f\\
 & =\int_{\RR^{n}}(h_{g}+c)\dd\mu_{f}+\int_{\SS^{n-1}}h_{K_{g}}\dd\nu_{f}=\delta(f,g)=\int_{\RR^{n}}h_{\widetilde{g}}\dd\mu_{f}+\int_{\SS^{n-1}}h_{K_{\widetilde{g}}}\dd\nu_{f}
\end{align*}
 as claimed. Therefore from now on we can (and will) assume that $g=\oo_{L}$. 

Next, assume that $0$ belongs to the interior of $L$ so the theory
of anisotropic total variations applies. Proposition \ref{prop:variation-measure}
immediately implies that
\[
\dd\left(Df\right)=-\nabla f\dd x+fn_{\partial K_{f}}\left.\dd\HH^{n-1}\right|_{\partial K_{f}}.
\]
 Therefore the measure $\mu$ from Proposition \ref{prop:TV-def}
is $\dd\mu=h_{L}\left(-\nabla f\right)\dd x+fh_{L}\left(n_{\partial K_{f}}\right)\left.\dd\HH^{n-1}\right|_{\partial K_{f}}$.
By the same proposition we then have 
\begin{align}
TV_{L}(f) & =\int\dd\mu=\int_{\RR^{n}}h_{L}(-\nabla f)\dd x+\int_{\partial K_{f}}fh_{L}\left(n_{\partial K_{f}}\right)\dd\HH^{n-1}\label{eq:tv-repr}\\
 & =\int_{\RR^{n}}h_{L}(\nabla\phi)f\dd x+\int_{\partial K_{f}}h_{L}\left(n_{\partial K_{f}}\right)f\dd\HH^{n-1}\nonumber \\
 & =\int h_{L}\dd\mu_{f}+\int h_{L}\dd\nu_{f}=\int h_{g}\dd\mu_{f}+\int h_{K_{g}}\dd\nu_{f},\nonumber 
\end{align}
 where of course $\phi=-\log f$. 

As \eqref{eq:tv-repr} holds for all $f\in\lc_{n}$ with $0<\int f<\infty$
we can in particular apply this formula to the indicator $\oo_{F}$
of a convex body $F$. We then obtain 
\begin{equation}
\per_{L}(F)=\TV_{L}\left(\oo_{F}\right)=\int h_{L}\dd\left(\left|F\right|\delta_{0}\right)+\int h_{L}\dd S_{F}=\int h_{L}\dd S_{F}=nW_{1}(F,L),\label{eq:per-quer}
\end{equation}
 where the last equality is a standard (and follows immediately from
\eqref{eq:surface-bodies} and \eqref{eq:steiner}). 

Therefore, using in order Proposition \ref{prop:quer-repr}, equation
\eqref{eq:per-quer}, Theorem \ref{thm:coarea} and equation \eqref{eq:tv-repr}
we have 
\[
\delta(f,g)=n\int_{0}^{\infty}W_{1}(F_{s},L)\dd s=\int_{0}^{\infty}\per_{L}\left(F_{s}\right)\dd s=\TV_{L}(f)=\int h_{g}\dd\mu_{f}+\int h_{K_{g}}\dd\nu_{f}.
\]
This concludes the proof in the case $0\in\text{int}(L)$. 

For the general case, fix a large Euclidean ball $B$ centered at
the origin such that $B+L$ contains the origin in its interior. From
Proposition \ref{prop:quer-repr} and standard properties of quermassintegrals
it follows that $\delta(f,\oo_{L})$ is linear in $L$ with respect
to the Minkowski addition. Therefore 
\begin{align*}
\delta(f,\oo_{L}) & =\delta(f,\oo_{L+B})-\delta(f,\oo_{B})\\
 & =\left(\int h_{L+B}\dd\mu_{f}+\int h_{L+B}\dd\nu_{f}\right)-\left(\int h_{B}\dd\mu_{f}+\int h_{B}\dd\nu_{f}\right)\\
 & =\int h_{L}\dd\mu_{f}+\int h_{L}\dd\nu_{f}
\end{align*}
 and the proof is complete. 
\end{proof}
Note that as a corollary we obtain the following result:
\begin{cor}
For $f\in\lc_{n}$ with $0<\int f<\infty$ the sum $\mu_{f}+\nu_{f}$
is centered, i.e. for all $v\in\RR^{n}$ we have 
\[
\int_{\RR^{n}}\left\langle x,v\right\rangle \dd\mu_{f}+\int_{\SS^{n-1}}\left\langle x,v\right\rangle \dd\nu_{f}=0.
\]
\end{cor}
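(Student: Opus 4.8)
The plan is to derive the centering statement directly from Proposition \ref{prop:result-bodies} by exploiting the translation behavior of the first variation $\delta(f,g)$. The key observation is that $\delta(f,g)$ is unchanged when $g$ is translated. Indeed, if $g_v(x) = g(x-v)$ is the translate of $g$ by $v \in \RR^n$, then $f \star (t \cdot g_v) = \left(f \star (t\cdot g)\right)_{tv}$, so $\int f\star(t\cdot g_v) = \int f\star(t\cdot g)$ for every $t>0$, and hence $\delta(f,g_v) = \delta(f,g)$.

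Next I would take $g = \oo_L$ for a compact convex body $L$ containing the origin in its interior, so that $g_v = \oo_{L+v}$. On one hand $\delta(f,\oo_{L+v}) = \delta(f,\oo_L)$ by the translation invariance just noted. On the other hand, Proposition \ref{prop:result-bodies} applies to both $\oo_L$ and $\oo_{L+v}$ (each has compact convex support), giving
\[
\delta(f,\oo_{L+v}) = \int_{\RR^n} h_{L+v}\,\dd\mu_f + \int_{\SS^{n-1}} h_{L+v}\,\dd\nu_f,
\]
and similarly for $L$. Since $h_{L+v}(x) = h_L(x) + \langle x, v\rangle$, subtracting the two identities yields
\[
0 = \int_{\RR^n} \langle x,v\rangle\,\dd\mu_f + \int_{\SS^{n-1}} \langle x,v\rangle\,\dd\nu_f.
\]
Both integrals are finite: $\mu_f$ has finite first moment and $\nu_f$ is finite by Proposition \ref{prop:finiteness}, and $h_L, h_{L+v}$ have at most linear growth, so all the expressions above are legitimate finite quantities and the subtraction is valid. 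This gives exactly the claimed identity for every $v$.

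The argument has essentially no obstacle: the only point requiring a moment's care is the translation invariance of $\delta(f,\cdot)$, which follows from the elementary identity $f\star(t\cdot g_v) = \bigl(f\star(t\cdot g)\bigr)_{tv}$ together with translation invariance of Lebesgue measure; and the fact that we are allowed to subtract the two representation formulas, which is guaranteed by the finiteness statements of Proposition \ref{prop:finiteness}. One should also note that choosing $L$ with the origin in its interior is harmless — Proposition \ref{prop:result-bodies} is stated for arbitrary compact convex $L$ — so no separate reduction is needed here.
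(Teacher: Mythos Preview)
Your argument is correct. The paper's own proof is a bit more direct: it applies Proposition~\ref{prop:result-bodies} just once, with $g=\oo_{\{v\}}$ the indicator of the singleton $\{v\}$. Then $f\star(t\cdot\oo_{\{v\}})$ is simply the translate of $f$ by $tv$, so $\delta(f,\oo_{\{v\}})=0$; and since $h_{\{v\}}(x)=\langle x,v\rangle$ on both $\RR^n$ and $\SS^{n-1}$, the identity drops out immediately. Your approach---applying the proposition to $\oo_L$ and $\oo_{L+v}$ and subtracting---is really the same idea (translation invariance of $\delta(f,\cdot)$ combined with the representation formula), just carried out with an auxiliary full-dimensional body rather than the degenerate one. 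The paper's choice avoids the subtraction step and the accompanying finiteness check, but your version has the mild advantage of never invoking Proposition~\ref{prop:result-bodies} on a lower-dimensional $L$, in case one were worried about that edge case.
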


\begin{proof}
Simply take $g=\oo_{\left\{ v\right\} }$ in Proposition \ref{prop:result-bodies},
and note that $\delta(f,g)=0$. 
\end{proof}
The fact that $\mu_{f}$ is centered when $\nu_{f}=0$ was observed
already in \cite{Cordero-Erausquin2015}. 

\section{\label{sec:completing-the-proof}Completing the proof}

In this section we finish the proof of Theorem \ref{thm:main-thm}.
We start with the case of compactly supported $g$. The following
lemma from \cite{Rotem2021} will be crucial:
\begin{lem}
\label{lem:der-formula}Fix $f,g\in\lc_{n}$ such that $0<\int f<\infty$
and $g$ is compactly supported. Then for (Lebesgue) almost every
$x\in\RR^{n}$ we have 
\[
\lim_{t\to0^{+}}\frac{\left(f\star\left(t\cdot g\right)\right)(x)-f(x)}{t}=h_{g}\left(\nabla\phi(x)\right)f(x).
\]
 Here $\phi=-\log f$, and the right hand side is interpreted at $0$
whenever $f(x)=0$. 
\end{lem}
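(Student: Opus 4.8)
The plan is to pass everything through logarithms. Write $f=e^{-\phi}$ and $g=e^{-\psi}$ with $\phi,\psi$ convex and lower semi-continuous, so that $h_{g}=\psi^{\ast}$. Substituting $y=x-tz$ in the definitions of $\star$ and $\cdot$ gives
\[
\bigl(f\star(t\cdot g)\bigr)(x)=\sup_{z\in\RR^{n}}e^{-\phi(x-tz)-t\psi(z)}=e^{-v(t,x)},\qquad v(t,x):=\inf_{z\in\RR^{n}}\bigl[\phi(x-tz)+t\psi(z)\bigr],
\]
i.e. $v(t,\cdot)$ is the Hopf--Lax / infimal-convolution expression with Lagrangian $\psi$, whose associated Hamiltonian is exactly $h_{g}$; this is the conceptual reason the formula should hold. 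Since $\frac{e^{-v(t,x)}-e^{-\phi(x)}}{t}=e^{-\phi(x)}\cdot\frac{e^{\phi(x)-v(t,x)}-1}{t}$ and $r\mapsto(e^{r}-1)/r$ is continuous at $0$, the lemma reduces to showing that for a.e.\ $x$ the quantity $\frac{\phi(x)-v(t,x)}{t}$ converges to $h_{g}(\nabla\phi(x))$ as $t\to0^{+}$; convergence to a finite limit also forces $v(t,x)\to\phi(x)$, which legitimizes the factorization.

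Next I would dispose of the degenerate points. Since $g$ is compactly supported, $\overline{K_{g}}$ lies in some ball $RB$. If $x\notin\overline{K_{f}}$ then $\mathrm{dist}(x,K_{f})>0$, so for small $t$ the set $K_{f}\cap(x-tK_{g})$ is empty and hence $\bigl(f\star(t\cdot g)\bigr)(x)=0=f(x)$, so the difference quotient vanishes identically for small $t$, matching the right-hand side (interpreted as $0$ when $f(x)=0$). The boundary $\partial K_{f}$ is Lebesgue-null. On $\mathrm{int}(K_{f})$ one has $f>0$, and $\phi$ is finite and convex, hence locally Lipschitz and therefore differentiable a.e.; moreover a bounded compactly supported log-concave $g$ has $\psi$ bounded below on its support, so $h_{g}=\psi^{\ast}$ is a finite continuous convex function on $\RR^{n}$. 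Thus it suffices to treat $x\in\mathrm{int}(K_{f})$ at which $\phi$ is differentiable.

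For such $x$ and all small $t$ we may restrict to $z\in K_{g}$ (the rest contributes $+\infty$ to $v$), and $x-tz\in\mathrm{int}(K_{f})$, so
\[
\frac{\phi(x)-v(t,x)}{t}=\sup_{z\in K_{g}}\Bigl[\frac{\phi(x)-\phi(x-tz)}{t}-\psi(z)\Bigr].
\]
For the lower bound, fix $z\in K_{g}$: differentiability of $\phi$ at $x$ gives $\frac{\phi(x)-\phi(x-tz)}{t}\to\langle\nabla\phi(x),z\rangle$, so $\liminf_{t\to0^{+}}\frac{\phi(x)-v(t,x)}{t}\ge\langle\nabla\phi(x),z\rangle-\psi(z)$, and taking the supremum over $z\in K_{g}$ yields $\liminf\ge h_{g}(\nabla\phi(x))$. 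For the upper bound I would use convexity: for fixed $z$ the map $s\mapsto\phi(x-sz)$ is convex near $s=0$, so its difference quotient $\frac{\phi(x-tz)-\phi(x)}{t}$ is nondecreasing in $t$ with limit $-\langle\nabla\phi(x),z\rangle$ as $t\to0^{+}$; hence $\frac{\phi(x)-\phi(x-tz)}{t}\le\langle\nabla\phi(x),z\rangle$ for every small $t$ and every $z\in K_{g}$ at once, whence $\frac{\phi(x)-v(t,x)}{t}\le\sup_{z}[\langle\nabla\phi(x),z\rangle-\psi(z)]=h_{g}(\nabla\phi(x))$. Combining the two bounds gives the claim.

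The genuinely delicate step is this last interchange of $\lim_{t\to0^{+}}$ with $\sup_{z\in K_{g}}$: the lower bound is automatic, but the upper bound requires \emph{uniform}-in-$z$ control of the one-sided difference quotients of $\phi$, for which convexity of $\phi$ (plus differentiability at the single point $x$ and boundedness of $K_{g}$) is exactly what is needed. Everything else — the logarithmic reduction, the treatment of $x\notin\overline{K_{f}}$, and the finiteness of $h_{g}$ — is routine. I also note that this statement is essentially part of Theorem~3.2 in \cite{Rotem2021}, so one may alternatively just quote it; the argument above is the same in substance.
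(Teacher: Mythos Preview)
Your argument is correct. The paper itself does not give a proof of this lemma but defers to \cite{Rotem2021} (there it is Lemma~3.7, not Theorem~3.2 as you write), remarking only that ``to prove this result with no regularity assumptions does take a bit of work which we will not reproduce here.'' The direct argument you supply---rewriting the sup-convolution as a Hopf--Lax infimum, reducing to the quotient $(\phi(x)-v(t,x))/t$, and handling the $\sup_{z}$/$\lim_{t}$ interchange via the monotonicity of convex difference quotients (so that $\tfrac{\phi(x)-\phi(x-tz)}{t}\le\langle\nabla\phi(x),z\rangle$ holds for \emph{all} $t>0$ simultaneously)---is exactly the substance of that reference, as you yourself note. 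Your treatment of the null sets ($x\notin\overline{K_{f}}$, $x\in\partial K_{f}$, points of non-differentiability of $\phi$) and of the finiteness of $h_{g}$ under the compact-support hypothesis is also clean and complete.
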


This lemma is proved in \cite{Rotem2021} as part of the proof of
Lemma 3.7 (The condition $h_{g}(y)\le m\left|y\right|+c$ in the statement
of that lemma is exactly equivalent to $g$ being compactly supported).
If all functions involved are sufficiently regular Lemma \ref{lem:der-formula}
follows from the standard formula for the first variation of the Legendre
transform (see more information in \cite{Rotem2021}). To prove this
result with no regularity assumptions does take a bit of work which
we will not reproduce here. 

We will now prove:
\begin{prop}
\label{prop:main-linear-growth}Fix $f,g\in\lc_{n}$ such that $0<\int f<\infty$
and $g$ is compactly supported. Then 
\[
\delta(f,g)=\int_{\RR^{n}}h_{g}\dd\mu_{f}+\int_{\SS^{n-1}}h_{K_{g}}\dd\nu_{f}.
\]
\end{prop}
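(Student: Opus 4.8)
The plan is to reduce the statement to the case $g=\oo_{L}$ already established in Proposition \ref{prop:result-bodies}, by comparing $g$ with the indicator of its own support $K:=K_{g}$. Throughout write $\phi=-\log f$. Two preliminary reductions make this comparison clean. First, by the scaling rule $\delta(f,e^{a}g)=\delta(f,g)+a\int f$ (Proposition 3.4 of \cite{Rotem2021}) together with $h_{e^{a}g}=h_{g}+a$ and $K_{e^{a}g}=K$, formula \eqref{eq:main-formula} holds for $g$ if and only if it holds for $e^{a}g$, so we may assume $\max g=1$. Second, $t\cdot g(\cdot-v)=(t\cdot g)(\cdot-tv)$ is a translate of $t\cdot g$, hence $\int f\star(t\cdot g(\cdot-v))=\int f\star(t\cdot g)$ and $\delta(f,g(\cdot-v))=\delta(f,g)$; since also $h_{g(\cdot-v)}=h_{g}+\langle v,\cdot\rangle$ and $h_{K+v}=h_{K}+\langle v,\cdot\rangle$, while $\mu_{f}+\nu_{f}$ is centered (the Corollary following Proposition \ref{prop:result-bodies}), \eqref{eq:main-formula} holds for $g$ if and only if it holds for every translate of $g$. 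Thus we may assume $g(0)=\max g=1$; then $t\cdot g$ and $f\star(t\cdot g)$ are pointwise nondecreasing in $t$, $f\star(t\cdot g)\ge f$, and $g\le\oo_{K}$, so $h_{g}\le h_{K}$.

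The key point is the identity
\begin{equation}
\delta(f,\oo_{K})-\delta(f,g)=\int_{\RR^{n}}(h_{K}-h_{g})\,\dd\mu_{f}.\label{eq:sketch-identity}
\end{equation}
Its right-hand side is finite, since $0\le h_{K}-h_{g}\le h_{K}$ is dominated by a linear function ($K$ being compact) and $\mu_{f}$ has a finite first moment (Proposition \ref{prop:finiteness}). On the other hand $h_{\oo_{K}}=(\chi_{K})^{\ast}=h_{K}$ and $K_{\oo_{K}}=K$, so Proposition \ref{prop:result-bodies} gives $\delta(f,\oo_{K})=\int h_{K}\,\dd\mu_{f}+\int h_{K}\,\dd\nu_{f}$; subtracting \eqref{eq:sketch-identity} from this yields exactly $\delta(f,g)=\int h_{g}\,\dd\mu_{f}+\int h_{K}\,\dd\nu_{f}$ (and, incidentally, $\delta(f,g)<\infty$), which is the assertion of the proposition.

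To prove \eqref{eq:sketch-identity}, note that $g\le\oo_{K}$ gives $f\star(t\cdot\oo_{K})\ge f\star(t\cdot g)$, so
\[
\delta(f,\oo_{K})-\delta(f,g)=\lim_{t\to0^{+}}\int_{\RR^{n}}\frac{(f\star(t\cdot\oo_{K}))(x)-(f\star(t\cdot g))(x)}{t}\,\dd x,
\]
with a nonnegative integrand (the two one-sided limits defining $\delta(f,\oo_{K})$ and $\delta(f,g)$ both exist and are finite, the latter because $\delta(f,g)\le\delta(f,\oo_{K})+(\log\max g)\int f<\infty$). By Lemma \ref{lem:der-formula}, applied to the compactly supported functions $\oo_{K}$ and $g$ and subtracted, this integrand converges for a.e.\ $x$ to $(h_{K}-h_{g})(\nabla\phi(x))f(x)$; and since for $x\notin\overline{K_{f}}$ both convolutions vanish once $t$ is small, the a.e.\ limit equals $(h_{K}-h_{g})(\nabla\phi)f$, supported on $K_{f}$, whose integral is the right-hand side of \eqref{eq:sketch-identity}.

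Everything therefore reduces to justifying the interchange of limit and integral, which is the main technical point. Here the crucial structural input is that $\oo_{K}$ and $g$ have the \emph{same} support $K$: each of $f\star(t\cdot\oo_{K})$ and $f\star(t\cdot g)$ produces a boundary layer of width $\sim t$ at $\partial K_{f}$ on which it is of order one and, divided by $t$, would blow up, but on that layer the two convolutions nearly coincide, so that their difference, divided by $t$, admits a $t$-uniform bound and contributes a quantity controlled exactly as the surface term of Section \ref{sec:anisotropic-total-variations} --- this is also the reason the $\nu_{f}$-parts of $\delta(f,\oo_{K})$ and $\delta(f,g)$ cancel in \eqref{eq:sketch-identity}, leaving only the $\mu_{f}$-term. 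The remaining behaviour, near the interior of $K_{f}$ and at infinity, is handled using the exponential decay $f\le Ae^{-c\left|\cdot\right|}$ of an integrable log-concave function together with the elementary inequalities $\phi(x)-\phi(x-y)\le\langle\nabla\phi(x),y\rangle$ and $1-e^{-u}\le u$; dominated convergence (or a uniform-integrability argument) then finishes the proof. I expect this domination step to be the only genuinely delicate point, and it stays close to estimates already carried out in \cite{Rotem2021}, whose machinery I would reuse as far as possible.
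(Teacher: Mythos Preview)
Your overall strategy---compare $g$ with indicators and invoke Proposition~\ref{prop:result-bodies}---is exactly the paper's, and your identity \eqref{eq:sketch-identity} is the heart of the matter. The gap is precisely where you say it is: the interchange of limit and integral in
\[
\lim_{t\to0^{+}}\int\frac{(f\star(t\cdot\oo_{K}))-(f\star(t\cdot g))}{t}\,\dd x
=\int(h_{K}-h_{g})\,\dd\mu_{f}.
\]
One direction is free: the integrand is nonnegative, so Fatou gives $\liminf\ge\int(h_{K}-h_{g})\,\dd\mu_{f}$, which already yields the upper bound $\delta(f,g)\le\int h_{g}\,\dd\mu_{f}+\int h_{K}\,\dd\nu_{f}$. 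This is literally the paper's upper-bound argument (with $\widetilde g=A\cdot\oo_{K_{g}}$). For the reverse inequality, however, your boundary-layer heuristic does not yield a pointwise dominating function: on $(K_{f}+tK)\setminus K_{f}$ the integrand can genuinely be of order $1/t$ (take $y^{\ast}/t$ near $\partial K$ where $g$ may vanish), so dominated convergence does not apply directly, and the uniform-integrability argument you allude to is not in \cite{Rotem2021} and would need to be worked out from scratch.

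The paper sidesteps this entirely by running Fatou a second time in the opposite direction. Instead of trying to push the single comparison $g\le\oo_{K}$ to an equality, it introduces a comparison from \emph{below}: set $K_{m}=\{x\in K_{g}:g(x)\ge\tfrac{1}{m}\}$ and $\widetilde g=\tfrac{1}{m}\oo_{K_{m}}\le g$. Then $(f_{t}-\widetilde f_{t})/t\ge0$, Fatou gives $\liminf\delta(f,g)\ge\int h_{g}\,\dd\mu_{f}+\int h_{K_{m}}\,\dd\nu_{f}$, and since $h_{K_{m}}\nearrow h_{K_{g}}$ one lets $m\to\infty$. No domination is needed anywhere. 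So your proof is not wrong, but the step you flag as ``the only genuinely delicate point'' is exactly the one the paper manages to avoid altogether; replacing your dominated-convergence attempt with this second Fatou-from-below closes the gap with no extra estimates.
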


\begin{proof}
To simplify our notation let us define $f_{t}=f\star\left(t\cdot g\right)$.
We also choose $A>0$ such that $0\le g(x)\le A$ for all $x\in\RR^{n}$,
and we define $\widetilde{g}=A\cdot\oo_{K_{g}}$ and $\widetilde{f}_{t}=f\star\left(t\cdot\widetilde{g}\right)$.
Note that $g\le\widetilde{g}$, so $f_{t}\le\widetilde{f}_{t}$ for
all $t>0$. Also note that 
\[
\lim_{t\to0^{+}}\frac{\widetilde{f}_{t}-f_{t}}{t}=\lim_{t\to0^{+}}\left(\frac{\widetilde{f}_{t}-f}{t}-\frac{f_{t}-f}{t}\right)=h_{\widetilde{g}}\left(\nabla\phi\right)f-h_{g}\left(\nabla\phi\right)f
\]
almost everywhere, where we used Lemma \ref{lem:der-formula} twice.
We may therefore apply Fatou's lemma and deduce that 
\begin{align*}
\liminf_{t\to0^{+}}\left(\frac{\int\widetilde{f}_{t}-\int f}{t}-\frac{\int f_{t}-\int f}{t}\right) & =\liminf_{t\to0^{+}}\int\frac{\widetilde{f}_{t}-f_{t}}{t}\dd x\ge\int\left(h_{\widetilde{g}}\left(\nabla\phi\right)f-h_{g}\left(\nabla\phi\right)f\right)\dd x\\
 & =\int\left(h_{\widetilde{g}}-h_{g}\right)\dd\mu_{f}.
\end{align*}
 However, by Proposition \ref{prop:result-bodies} we know that 
\[
\lim_{t\to0^{+}}\frac{\int\widetilde{f}_{t}-\int f}{t}=\delta(f,\widetilde{g})=\int h_{\widetilde{g}}\dd\mu_{f}+\int h_{K_{g}}\dd\nu_{f},
\]
 where we used the fact that $K_{\tilde{g}}=K_{g}$. Combining the
last two formulas we see that 
\[
\int h_{\widetilde{g}}\dd\mu_{f}+\int h_{K_{g}}\dd\nu_{f}-\limsup_{t\to0^{+}}\left(\frac{\int f_{t}-\int f}{t}\right)\ge\int\left(h_{\widetilde{g}}-h_{g}\right)\dd\mu_{f},
\]
 so 
\begin{equation}
\limsup_{t\to0^{+}}\frac{\int f_{t}-\int f}{t}\le\int h_{g}\dd\mu_{f}+\int h_{K_{g}}\dd\nu_{f}.\label{eq:main-upper-bound}
\end{equation}
 Note that we were allowed to cancel $\int h_{\widetilde{g}}\dd\mu_{f}$
from both sides since this expression is finite by Proposition \ref{prop:finiteness}. 

The proof of the opposite inequality is similar. Fix $m\in\mathbb{N}$
and consider $K_{m}=\left\{ x\in K_{g}:\ g(x)\ge\frac{1}{m}\right\} $.
This time we define $\widetilde{g}=\frac{1}{m}\oo_{K_{m}}$ and $\widetilde{f}_{t}=f\star\left(t\cdot\widetilde{g}\right)$,
and we have the opposite inequality $f_{t}\ge\widetilde{f}_{t}$.
Applying Fatou's lemma in the same way we have 
\begin{align*}
\liminf_{t\to0^{+}}\left(\frac{\int f_{t}-\int f}{t}-\frac{\int\widetilde{f}_{t}-\int f}{t}\right) & =\liminf_{t\to0^{+}}\int\frac{f_{t}-\widetilde{f}_{t}}{t}\ge\int\left(h_{g}\left(\nabla\phi\right)f-h_{\widetilde{g}}\left(\nabla\phi\right)f\right)\\
 & =\int\left(h_{g}-h_{\widetilde{g}}\right)\dd\mu_{f},
\end{align*}
 and this time we have 
\[
\lim_{t\to0^{+}}\frac{\int\widetilde{f}_{t}-\int f}{t}=\delta(f,\widetilde{g})=\int h_{\widetilde{g}}\dd\mu_{f}+\int h_{K_{m}}\dd\nu_{f},
\]
 so we obtain 
\[
\liminf_{t\to0^{+}}\frac{\int f_{t}-\int f}{t}\ge\int h_{g}\dd\mu_{f}+\int h_{K_{m}}\dd\nu_{f}.
\]
 Since $K_{m}\subseteq K_{m+1}$ for all $m$ and $\overline{\bigcup_{m=1}^{\infty}K_{m}}=K_{g}$,
we have $h_{K_{g}}=\sup_{m}h_{K_{m}}=\lim_{m\to\infty}h_{K_{m}}$.
We may therefore let $m\to\infty$ in the last formula and deduce
that 
\[
\liminf_{t\to0^{+}}\frac{\int f_{t}-\int f}{t}\ge\int h_{g}\dd\mu_{f}+\int h_{K_{g}}\dd\nu_{f},
\]
 which together with \eqref{eq:main-upper-bound} completes the proof. 
\end{proof}
Now we can finally prove Theorem \ref{thm:main-thm}. The final step
is an approximation argument, which is essentially the same as the
one in \cite{Rotem2021}. Therefore we repeat the argument briefly
without repeating some of the computations:
\begin{proof}[Proof of Theorem \ref{thm:main-thm}]
Define a sequence $\left\{ g_{m}\right\} _{m=1}^{\infty}\subseteq\lc_{n}$
by 
\[
g_{m}(x)=\begin{cases}
g(x) & \left|x\right|\le m\\
0 & \text{otherwise. }
\end{cases}
\]
A computation shows that $h_{g_{m}}\nearrow h_{g}$ as $m\to\infty$.
Moreover, since $K_{g_{m}}\subseteq K_{g_{m+1}}$ for all $m$ and
$\overline{\bigcup_{m=1}^{\infty}K_{g_{m}}}=K_{g}$ we also have $h_{K_{g_{m}}}\nearrow h_{K_{g}}$.
If we also define $f_{t}=f\star(t\cdot g)$ and $f_{t,m}=f\star\left(t\cdot g_{m}\right)$
then another computation shows that $f_{t,m}(x)\nearrow f_{t}(x)$
for all $t>0$ and $x\in\RR^{n}$. This implies that $\int f_{t,m}\nearrow\int f_{t}$
(see e.g. Lemma 3.2 of \cite{Artstein-Avidan2004}). 

Using the chain rule for derivatives we may write 
\[
\delta(f,g)=\int f\cdot\lim_{t\to0^{+}}\frac{\log\int f_{t}-\log\int f}{t}.
\]
This formula has the advantage that by the Pr\'{e}kopa-Leindler inequality
(\cite{Prekopa1971}, \cite{Leindler1972}) the function $t\mapsto\log\int f_{t}$
is concave, so we may replace the limit by a supremum. It follows
that 
\begin{align*}
\lim_{m\to\infty}\delta(f,g_{m}) & =\sup_{m}\delta(f,g_{m})=\int f\cdot\sup_{m}\sup_{t>0}\frac{\log\int f_{t,m}-\log\int f}{t}\\
 & =\int f\cdot\sup_{t>0}\sup_{m}\frac{\log\int f_{t,m}-\log\int f}{t}=\int f\cdot\sup_{t>0}\frac{\log\int f_{t}-\log\int f}{t}=\delta(f,g).
\end{align*}
 Therefore, applying Proposition \ref{prop:main-linear-growth} and
the monotone convergence theorem we conclude that 
\[
\delta(f,g)=\lim_{m\to\infty}\delta(f,g_{m})=\lim_{m\to\infty}\left(\int h_{g_{m}}\dd\mu_{f}+\int h_{K_{g_{m}}}\dd\nu_{f}\right)=\int h_{g}\dd\mu_{f}+\int h_{K_{g}}\dd\nu_{f},
\]
 and the proof is complete. 
\end{proof}
As a corollary of the theorem we now prove that the measures $\text{\ensuremath{\mu_{f}}}$
and $\ensuremath{\nu_{f}}$ characterize the function $f$ uniquely
up to translations: 
\begin{cor}
\label{cor:min-uniq}Fix $f,g\in\lc_{n}$ with $0<\int f,\int g<\infty$
and assume that $\mu_{f}=\mu_{g}$ and $\nu_{f}=\nu_{g}$. Then there
exists $x_{0}\in\RR^{n}$ such that $f(x)=g(x-x_{0})$ .
\end{cor}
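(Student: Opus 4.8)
The plan is to combine Theorem~\ref{thm:main-thm} with the Pr\'ekopa--Leindler inequality and its equality case. The first observation is almost trivial: since $\int f=\mu_{f}(\RR^{n})$ and $\int g=\mu_{g}(\RR^{n})$ (this is how $\mu_{f}$ was normalized; see the proof of Proposition~\ref{prop:finiteness}), the hypothesis $\mu_{f}=\mu_{g}$ already forces $V:=\int f=\int g$. Next, Theorem~\ref{thm:main-thm} turns the equality of the surface area measures into the equality $\delta(f,h)=\delta(g,h)$ for \emph{every} $h\in\lc_{n}$, because both sides are computed from $(\mu_{f},\nu_{f})=(\mu_{g},\nu_{g})$ by the same formula~\eqref{eq:main-formula}. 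In particular $\delta(f,g)=\delta(g,g)$ and $\delta(g,f)=\delta(f,f)$, and all four of these numbers are finite, being first variations of explicit functions of the form $(1+t)\cdot f$ or $(1+t)\cdot g$.

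I would then study the log-concave segment $h_{\lambda}:=(1-\lambda)\cdot f\star\lambda\cdot g$, $\lambda\in[0,1]$, together with $\Lambda(\lambda):=\log\int h_{\lambda}$, which is \emph{concave} by the Pr\'ekopa--Leindler inequality (exactly as used in Section~\ref{sec:completing-the-proof}) and satisfies $\Lambda(0)=\Lambda(1)=\log V$. Because the support function is affine along this segment, i.e. $h_{h_{\lambda}}=(1-\lambda)h_{f}+\lambda h_{g}$ and $K_{h_{\lambda}}=(1-\lambda)K_{f}+\lambda K_{g}$, one has the pointwise identity
\[
h_{\lambda+s}\star(s\cdot f)=h_{\lambda}\star(s\cdot g)\qquad(0\le\lambda\le\lambda+s\le1),
\]
both sides being the upper semi-continuous log-concave function with support function $(1-\lambda)h_{f}+(\lambda+s)h_{g}$. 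Integrating this identity, taking $\lambda=0$ and $\lambda=1$, dividing by $s$ and letting $s\to0^{+}$ (using the definition of the first variation together with the continuity of $\delta(\,\cdot\,,f)$ and $\delta(\,\cdot\,,g)$ along the curve) should yield
\[
\Lambda'(0^{+})=\frac{\delta(f,g)-\delta(f,f)}{V},\qquad\Lambda'(1^{-})=\frac{\delta(g,g)-\delta(g,f)}{V}.
\]
By the two identities of the first paragraph, \emph{both} right-hand sides equal $(\delta(g,g)-\delta(f,f))/V$, so $\Lambda'(0^{+})=\Lambda'(1^{-})$. A concave function on $[0,1]$ whose two endpoint one-sided derivatives coincide is affine, hence $\Lambda\equiv\log V$ on $[0,1]$; equivalently
\[
\int\bigl((1-\lambda)\cdot f\star\lambda\cdot g\bigr)=\Bigl(\int f\Bigr)^{1-\lambda}\Bigl(\int g\Bigr)^{\lambda}\qquad\text{for all }\lambda\in[0,1].
\]

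This is precisely equality in the Pr\'ekopa--Leindler inequality, so by its equality case there are $c>0$ and $x_{0}\in\RR^{n}$ with $g(x)=c\,f(x-x_{0})$. A translation does not change the moment measure and $\mu_{cf}=c\,\mu_{f}$, hence $\mu_{g}=c\,\mu_{f}$; since $\mu_{f}=\mu_{g}$ has total mass $V>0$ this forces $c=1$, and therefore $f(x)=g(x-x_{0})$ after renaming $x_{0}$, which is the assertion. The step I expect to be the main obstacle is the computation of the one-sided derivatives $\Lambda'(0^{+}),\Lambda'(1^{-})$: it conceals exactly the kind of limit--integral interchange that Theorem~\ref{thm:main-thm} is designed to handle, and to carry it out rigorously one should lean on Theorem~\ref{thm:main-thm} (applied along the segment $h_\lambda$) and on the monotonicity and concavity arguments already used in the paper. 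Essentially the same argument can be organized through the functional form of Minkowski's first inequality $\delta(f,g)\ge\int f\log\frac{\int g}{\int f}+n\int f+\int f\log f$ (whose equality case, again inherited from Pr\'ekopa--Leindler, is "$g$ is a translate of a multiple of $f$"): applied to the pairs $(f,g)$ and $(g,f)$ and combined with $\int f=\int g$ and $\delta(f,\,\cdot\,)=\delta(g,\,\cdot\,)$, it forces equality, and one concludes as before.
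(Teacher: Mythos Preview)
Your proposal is correct and follows essentially the same route as the paper: both deduce from Theorem~\ref{thm:main-thm} and the hypotheses that $\int f=\int g$, $\delta(f,g)=\delta(g,g)$ and $\delta(g,f)=\delta(f,f)$, and then conclude via the equality case of the Pr\'ekopa--Leindler inequality (Dubuc). The only difference is cosmetic: the paper packages this last step as a citation to Corollary~5.3 of \cite{Colesanti2013}, whereas you spell out that corollary's proof explicitly through the concave function $\Lambda(\lambda)=\log\int h_\lambda$ (and your formulas $\Lambda'(0^+)=(\delta(f,g)-\delta(f,f))/V$, $\Lambda'(1^-)=(\delta(g,g)-\delta(g,f))/V$ follow cleanly from $h_s=(1-s)\cdot\bigl(f\star\tfrac{s}{1-s}\cdot g\bigr)$ and $\delta(f,f)=n\int f$, with no need for the continuity of $\delta(\cdot,f)$ along the curve).
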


\begin{proof}
Corollary 5.3 of \cite{Colesanti2013} states that if $f,g\in\lc_{n}$
satisfy $0<\int f=\int g<\infty$, $\delta(f,g)=\delta(g,g)$ and
$\delta(g,f)=\delta(f,f)$, then there exists $x_{0}\in\RR^{n}$ such
that $f(x)=g(x-x_{0})$. This is proved by showing that we have equality
in the Pr\'{e}kopa-Leindler inequality, and using a characterization
of the equality case by Dubuc (\cite{Dubuc1977}). A similar strategy
was used in \cite{Cordero-Erausquin2015}, and indeed in the classical
proof that the surface area measure $S_{K}$ determines the body $K$
uniquely. 

In our case we have $\int f=\mu_{f}\left(\RR^{n}\right)=\mu_{g}\left(\RR^{n}\right)=\int g$,
and since 
\[
\delta(f,g)=\int_{\RR^{n}}h_{g}\dd\mu_{f}+\int_{\SS^{n-1}}h_{K_{g}}\dd\nu_{f}
\]
 we clearly have $\delta(f,g)=\delta(g,g)$ and similarly $\delta(g,f)=\delta(f,f)$.
The result follows immediately. 
\end{proof}
In \cite{Colesanti2013} the same argument was used but with Theorem
\ref{thm:colesanti-fragala} replacing Theorem \ref{thm:main-thm},
so uniqueness was only proved under the regularity assumptions of
that theorem. In \cite{Cordero-Erausquin2015} there was no explicit
representation formula for $\delta(f,g)$, but a weaker statement
that in the essentially continuous case was also sufficient in order
to reduce the uniqueness result to the equality case of Pr\'{e}kopa-Leindler
inequality (see also \cite{Rotem2021} for an explanation of why the
result of \cite{Cordero-Erausquin2015} is a weak representation theorem
for $\delta(f,g)$). We see that in order to get a clean uniqueness
result in the general case one indeed needs the full strength of Theorem
\ref{thm:main-thm}. 

Of course, Corollary \ref{cor:min-uniq} raises the question of existence:
Given measures $\mu$ and $\nu$, when is there a function $f\in\lc_{n}$
with $\mu_{f}=\mu$ and $\nu_{f}=\nu$? We believe this question can
be answered by essentially the same argument as the argument of \cite{Cordero-Erausquin2015},
which handled the case $\nu_{f}\equiv0$, but the full details are
beyond the scope of this paper. 

\bibliographystyle{plain}
\bibliography{anisotropic.bbl}

\end{document}